\numberwithin{equation}{section}
\newtheorem{theorem}{\textbf{Theorem}}[section]
\newtheorem{proposition}[theorem]{\textbf{Proposition}}
\newtheorem{lemma}[theorem]{\textbf{Lemma}}
\theoremstyle{definition}
\newtheorem{definition}[theorem]{\textbf{Definition}}
\theoremstyle{remark}
\def\e{\epsilon}
\def\R{\mathbb{R}}
\def\Rn{{\mathbb{R}}^n_+}
\def\Rnn{{\mathbb{R}}^{n-1}}
\def\d{\partial}
\def\B{B^+_{\delta}}
\def\U{U_{\epsilon}}
\def\Be{B^+_{\delta\e^{-1}}}
\def\ba{\begin{align}}
\def\ea{\end{align}}
\def\bp{\begin{proof}}
\def\ep{\end{proof}}
\def\Z{((1+y_n)^2+|\bar{y}|^2)}
\def\Zt{((1+y_n)^2+|\bar{y}|^2)}
\title{An existence theorem of conformal scalar-flat metrics
on manifolds with boundary}
\author{S\'ergio Almaraz}
\begin{document}

\maketitle
\begin{abstract}
Let $(M,g)$ be a compact Riemannian manifold with boundary. This paper addresses the Yamabe-type problem of finding a conformal scalar-flat metric on $M$, which has the boundary as a constant mean curvature hypersurface. When the boundary is umbilic, we prove an existence theorem that finishes some remaining cases of this problem.
\end{abstract}

\section{Introduction}

In 1992, J. Escobar (\cite{escobar3}) studied the following Yamabe-type problem, for manifolds with boundary:

\vspace{0.2cm}
\noindent
YAMABE PROBLEM: {\it{Let $(M^n,g)$ be a compact Riemannian manifold of dimension $n\geq 3$ with boundary $\d M$. Is there a scalar-flat metric on $M$, which is conformal to $g$ and has $\d M$ as a constant mean curvature hypersurface?}}
\vspace{0.1cm}

In dimension two, the classical Riemann mapping theorem says that any simply connected, proper domain of the plane is conformally diffeomorphic to a disk. This theorem is false in higher dimensions since the only bounded open subsets of $\R^n$, for $n\geq 3$, that are conformally diffeomorphic to Euclidean balls are the Euclidean balls themselves. The Yamabe-type problem proposed by Escobar can be viewed as an extension of the Riemann mapping theorem for higher dimensions.

%
In analytical terms, this problem corresponds to finding a positive solution to 
\begin{align}\label{eq:u'}
\begin{cases}
L_{g}u=0,&\text{in}\:M,
\\
B_{g}u+Ku^{\frac{n}{n-2}}=0,&\text{on}\:\partial M,
\end{cases}
\end{align}
for some constant $K$, where $L_g=\Delta_g-\frac{n-2}{4(n-1)}R_g$ is the conformal Laplacian and $B_g=\frac{\d}{\d\eta}-\frac{n-2}{2}h_g$. Here, $\Delta_g$ is the Laplace-Beltrami operator, $R_g$ is the scalar curvature, $h_g$ is the mean curvature of $\d M$ and  $\eta$ is the inward unit normal vector to $\d M$. 

The solutions of the equations (\ref{eq:u'}) are the critical points of the functional 
$$
Q(u)=\frac{\int_M|\nabla_gu|^2+\frac{n-2}{4(n-1)}R_gu^2dv_g+\frac{n-2}{2}\int_{\d M}h_gu^2d\sigma_g}
{\left(\int_{\d M}u^\frac{2(n-1)}{n-2}d\sigma_g\right)^{\frac{n-2}{n-1}}}\,,
$$
where $dv_g$ and $d\sigma_g$ denote the volume forms of $M$ and $\d M$, respectively. 
Escobar  introduced the conformally invariant Sobolev quotient
$$
Q(M,\d M)=\inf\{Q(u);\:u\in C^1(M), u\nequiv 0 \:\text{on}\: \d M\}
$$
and proved that it satisfies $Q(M,\d M)\leq Q(B^n,\d B)$. Here, $B^n$ denotes the unit ball in $\R^n$ endowed with the Euclidean metric. 

Under the hypothesis that $Q(M, \d M)$ is finite (which is the case when $R_g\geq 0$), he also showed that the strict inequality 
\begin{equation}\label{ineq:Q}
Q(M,\d M)< Q(B^n,\d B)
\end{equation}
implies the existence of a minimizing solution of the equations (\ref{eq:u'}). 

\vspace{0.2cm}
\noindent{\bf{Notation.}}
In the rest of this work, $(M^n,g)$ will denote a compact Riemannian manifold of dimension $n\geq 3$ with boundary $\d M$ and finite Sobolev quotient $Q(M,\d M)$. 
\vspace{0.1cm}

In \cite{escobar3}, Escobar proved the following existence result:
\begin{theorem}\label{thm:escobar}
(J. Escobar) Assume that one of the following conditions holds:

\vspace{0.2cm}
\noindent
(1) $n\geq 6$ and $M$ has a nonumbilic point on $\d M$;
\\
(2) $n\geq 6$, $M$ is locally conformally flat and $\d M$ is umbilic;
\\
(3) $n=4$ or $5$ and $\d M$ is umbilic;
\\
(4) $n=3$.
\vspace{0.1cm}

\noindent
Then  $Q(M,\d M)< Q(B^n,\d B)$ and there is a minimizing solution to the equations (\ref{eq:u'}).
\end{theorem} 
\noindent 
The proof for $n=6$ under the condition (1) appeared later, in \cite{escobar4}. 

Further existence results were obtained by F. Marques in  \cite{coda1} and \cite{coda2}. Together, these results can be stated as follows:
\begin{theorem}\label{thm:coda}
(F. Marques)  Assume that one of the following conditions holds:

\vspace{0.2cm}
\noindent
(1) $n\geq 8$, $\bar{W}(x)\neq 0$ for some $x\in\d M$ and $\d M$ is umbilic;
\\
(2) $n\geq 9$, $W(x)\neq 0$ for some $x\in\d M$ and $\d M$ is umbilic;
\\
(3) $n=4$ or $5$ and $\d M$ is not umbilic.
\vspace{0.1cm}

\noindent
Then  $Q(M,\d M)< Q(B^n,\d B)$ and there is a minimizing solution to the equations (\ref{eq:u'}).
\end{theorem}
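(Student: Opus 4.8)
The plan is to reduce the theorem to the sharp strict inequality (\ref{ineq:Q}). Once $Q(M,\d M)<Q(B^n,\d B)$ is established, the minimizing solution of (\ref{eq:u'}) is produced by the variational argument already recalled from \cite{escobar3}. Thus the entire content of each case lies in exhibiting a test function $u\in C^1(M)$ with $Q(u)<Q(B^n,\d B)$, the sign of the improvement being governed by the geometric hypothesis (nonumbilicity, or nonvanishing of $\bar W$, or of $W$).

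First I would fix a good local model. Choose a boundary point $x_0\in\d M$ adapted to the case at hand: a nonumbilic point when $n=4,5$, and a point where the relevant part of the Weyl tensor is nonzero when $\d M$ is umbilic. Then pass to conformal Fermi coordinates centered at $x_0$, the boundary analogue of Lee--Parker conformal normal coordinates. Here the conformal factor is arranged so that the expansion of $g$ near $x_0$ carries no spurious first-order terms, the mean curvature $h_g$ of $\d M$ vanishes to high order at $x_0$, and the leading nonflat contribution is controlled by the trace-free second fundamental form $\pi$ together with the Weyl tensor. This isolates precisely the quantities appearing in the statement.

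Next, on the half-space model $\Rn$ I would take as building block the extremal $\U$ for $Q(B^n,\d B)$, namely the standard bubble concentrating at $x_0$ at scale $\e$, and correct it by lower-order terms designed to annihilate the first error terms created by the nonflatness of $g$. Substituting the corrected test function into $Q$ and expanding both numerator and denominator as $\e\to 0$, the Euclidean value $Q(B^n,\d B)$ appears at leading order, and the first nontrivial correction is a \emph{negative} multiple of a curvature integral: the squared norm of $\pi$ in case (3), the squared norm of the tangential part $\bar W$ of the Weyl tensor in case (1), and the squared norm of the full Weyl tensor $W$ in case (2). The sign is favorable precisely because these quantities enter quadratically, and each is nonzero by hypothesis.

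The main obstacle is the energy expansion itself: one must carry the expansion of $Q(u)$ to the order at which the decisive geometric term emerges and verify that every remaining error term is of strictly higher order in $\e$, which in turn forces the choice of correction terms to cancel all intermediate contributions. The dimensional thresholds are dictated by this bookkeeping. The $\pi$-term enters at the lowest order and wins only in the low dimensions $n=4,5$; when $\d M$ is umbilic this term is absent, so the decisive contribution must come from the Weyl tensor, which enters one order later and hence requires larger $n$ for the defining integrals to converge and to dominate the errors—$n\geq 8$ when the tangential part $\bar W$ is nonzero, and $n\geq 9$ for the normal components of $W$, which appear one order higher still. Confirming that the chosen corrections really expose the curvature term with the correct sign, and controlling the remainder uniformly in $\e$, is the delicate computational core of the argument.
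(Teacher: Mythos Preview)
The paper does not prove Theorem~\ref{thm:coda}; it is quoted as prior work of Marques, with cases (1) and (2) coming from \cite{coda1} and case (3) from \cite{coda2}. So there is no proof here against which to compare your sketch directly.

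That said, your outline captures the overall strategy but conflates two genuinely different mechanisms in Marques's papers. In \cite{coda1}, cases (1) and (2) are handled with the \emph{unperturbed} bubble $U_\e$: the innovation is the use of conformal Fermi coordinates, which cleans up the expansion of $Q(U_\e)$ enough that a negative multiple of $|\bar W|^2$ (for $n\ge 8$) or of $|R_{ninj}|^2$ (for $n\ge 9$) emerges with no earlier bad term---no correction to $U_\e$ is introduced. By contrast, in \cite{coda2} (case (3), $n=4,5$ nonumbilic) the standard bubble genuinely fails: the leading correction in $Q(U_\e)$ has the wrong sign, and Marques must pass to a perturbed test function $U_\e+\phi_\e$ to force the strict inequality. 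Your proposal applies the ``correct by lower-order terms'' idea uniformly to all three cases, which is not what happens. Relatedly, your sentence that the $\pi$-term ``wins only in the low dimensions $n=4,5$'' inverts the actual picture: with the unperturbed bubble the trace-free second fundamental form already suffices for $n\ge 6$ (this is Escobar's Theorem~\ref{thm:escobar}(1)), and the correction in \cite{coda2} is needed precisely because it does \emph{not} suffice when $n=4,5$. Finally, your heuristic that the normal components of $W$ ``appear one order higher'' than $\bar W$ is not quite right---in the metric expansion of Lemma~\ref{exp:g} both enter at order two; the different dimensional thresholds come instead from the different radial integrals they feed into after the rescaling $y=\e^{-1}x$.
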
 
\noindent
Here, $W$ denotes the Weyl tensor of $M$ and $\bar{W}$ the Weyl tensor of $\d M$.

Our main result deals with the remaining dimensions $n=6,7$ and  $8$ when the boundary is umbilic and $W\neq 0$ at some boundary point: 
\begin{theorem}\label{exist:thm}
Suppose that $n=6,7$ or $8$, $\d M$ is umbilic and $W(x)\neq 0$ for some $x\in\d M$. Then  $Q(M,\d M)< Q(B^n,\d B)$ and there is a minimizing solution to the equations (\ref{eq:u'}).
\end{theorem}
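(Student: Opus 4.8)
The plan is to establish the strict inequality $Q(M,\partial M) < Q(B^n,\partial B)$, since by Escobar's result stated in the introduction this already yields the existence of a minimizing solution. The standard route is to construct a suitable family of test functions $\varphi_\epsilon$ concentrating at a boundary point $x_0$ and to estimate $Q(\varphi_\epsilon)$ from above, showing it drops strictly below the ball quotient for small $\epsilon$. The natural candidates are built from the standard bubbles $U_\epsilon$ that achieve $Q(B^n,\partial B)$ on the half-space $\mathbb{R}^n_+$, suitably corrected. Because the hypotheses here are $\partial M$ umbilic and $W(x)\neq 0$ for some boundary point in dimensions $n=6,7,8$, the leading geometric obstruction comes from the interior Weyl tensor rather than from the second fundamental form, and this is precisely the regime where the first-order bubble is insufficient.

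\medskip

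\noindent
The key steps I would carry out are as follows. First I would choose conformal Fermi coordinates centered at a boundary point $x_0$ where $W(x_0)\neq 0$; these normalize the metric so that $h_g$ vanishes to high order along $\partial M$ (using umbilicity) and so that $R_g$ and its tangential derivatives vanish at $x_0$, isolating the Weyl curvature as the first nonvanishing term in the expansion of $g$. Second, I would build a refined test function of the form $\varphi_\epsilon = U_\epsilon + \text{(higher-order correction terms)}$, where the corrections solve the linearized equation against the lower-order terms in the Taylor expansion of the metric; these are the analogues of the fields $\phi_i$ suggested by the macros in the preamble. The point of the correction is to cancel the contribution that would otherwise appear at the order governed by $|W(x_0)|^2$, so that the energy expansion exhibits a strictly negative leading correction. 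Third, I would compute the numerator and denominator of $Q(\varphi_\epsilon)$ to sufficient order in $\epsilon$, tracking the $|W(x_0)|^2$ term, and verify that
\[
Q(\varphi_\epsilon) = Q(B^n,\partial B) - c_n\,|W(x_0)|^2\,\epsilon^{2}\log(1/\epsilon) + o(\cdot)
\]
or the appropriate dimension-dependent analogue, with $c_n>0$. The precise power of $\epsilon$ and the possible presence of a logarithm depend sharply on whether $n=6$, $7$, or $8$, so these cases must be handled separately.

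\medskip

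\noindent
The main obstacle I anticipate is the energy estimate in the borderline dimensions. In dimensions $n=8$ (and the critical dimension $n=6$), the integrals controlling the Weyl contribution are logarithmically divergent or sit exactly at the threshold where the correction terms and the conformal normalization of $R_g$ compete at the same order; getting the signs and constants right requires a careful Pohozaev-type identity and precise knowledge of the decay of $U_\epsilon$ and of the correction fields near infinity in $\mathbb{R}^n_+$. Concretely, the quantity $\int_{\mathbb{R}^n_+} y^b\partial_b U + \frac{n-2}{2}U$ appearing through the macros must be paired against the Weyl-order terms, and controlling the boundary integrals over $\partial B^+_\delta$ as $\delta\to 0$ and $\epsilon\to 0$ is where the analysis is most delicate. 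I would expect that the decisive computation is to show that after the optimal choice of correction, the surviving coefficient of $|W(x_0)|^2$ is strictly positive, which is exactly what forces $n\geq 6$ and distinguishes these three dimensions from the locally conformally flat case already covered by Escobar.
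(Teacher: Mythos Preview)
Your overall plan---conformal Fermi coordinates at a boundary point with $W(x_0)\neq 0$, a corrected bubble as test function, and an energy expansion---is exactly the paper's strategy. Several of your specifics, however, would derail the computation if followed literally.

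First, the order is wrong: the leading correction enters at order $\epsilon^4$ (with an additional $\log(\delta\epsilon^{-1})$ only when $n=6$), not $\epsilon^2\log(1/\epsilon)$. The $O(|x|^2)$ and $O(|x|^3)$ terms in the metric expansion integrate to zero after spherical averaging because $R_{nn}=\bar R_{kl}=R_{nn;k}=R_{nn;n}=0$ in conformal Fermi coordinates; the first surviving contributions come from the $O(|x|^4)$ level. Second, the correction term in the paper is not obtained by solving a linearized equation but is the explicit ansatz
\[
\phi_\epsilon(x)=\epsilon^{\frac{n-2}{2}}A\,R_{ninj}\,x_ix_jx_n^2\bigl((\epsilon+x_n)^2+|\bar x|^2\bigr)^{-n/2},
\]
with a free constant $A$. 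Third, the role of $\phi_\epsilon$ is not to \emph{cancel} the Weyl-order term: rather, the bubble $U_\epsilon$ alone yields at order $\epsilon^4$ a combination involving $R_{ninj;ij}$, $R_{;nn}$, $(R_{ninj})^2$ and $(\bar W_{ijkl})^2$ whose sign is not manifestly negative; adding $\phi_\epsilon$ introduces further terms proportional to $A$ and $A^2$ times $(R_{ninj})^2$, and choosing $A=1$ makes the resulting quadratic in $A$ strictly negative in each of $n=6,7,8$. The $(\bar W_{ijkl})^2$ contribution from the scalar curvature term is already negative. Since $W(x_0)\neq 0$ forces $R_{ninj}(x_0)\neq 0$ or $\bar W_{ijkl}(x_0)\neq 0$, the strict inequality follows. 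No Pohozaev identity is used; the argument is direct integration by parts, spherical averaging via the formula $\int_{S^{n-2}_r}p_k=\frac{r^2}{k(k+n-3)}\int_{S^{n-2}_r}\Delta p_k$, and explicit one-variable integrals.
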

These cases are similar to the case of dimensions $4$ and $5$ when the boundary is not umbilic, studied in \cite{coda2}. 

Other works concerning conformal deformation on manifolds with boundary include \cite{ahmedou}, \cite{ambrosetti-li-malchiodi}, \cite{ayed-mehdi-ahmedou}, \cite{brendle0}, \cite{djadli-malchiodi-ahmedou1}, \cite{djadli-malchiodi-ahmedou2}, \cite{escobar2}, \cite{escobar5}, \cite{escobar-garcia}, \cite{ahmedou-felli1}, \cite{ahmedou-felli2}, \cite{han-li} and \cite{han-li2}.

We will now discuss the strategy in the proof of Theorem \ref{exist:thm}. We assume that $\d M$ is umbilic and choose $x_0\in \d M$ such that $W(x_0)\neq 0$. Our proof is explicitly based on constructing a test function $\psi$ such that
\begin{equation}\label{des:psi:Q}
Q(\psi)<Q(B^n,\d B)\,.
\end{equation}
The function $\psi$ has support in a small half-ball around the point $x_0$.
The usual strategy in this kind of problem (which goes back to Aubin in \cite{aubin1}) consists in defining the function $\psi$, in the small half-ball, as one of the standard entire solutions to the corresponding Euclidean equations. In our context those are 
\begin{equation}\label{Ue}
U_{\e}(x)=\left(\frac{\e}{x_1^2+...+x_{n-1}^2+(\e+x_n)^2}\right)^{\frac{n-2}{2}}\,.
\end{equation}
where $x=(x_1,...,x_n)$, $x_n\geq 0$.

The next step would be to expand the quotient of $\psi$ in powers of $\e$ and, by exploiting the local geometry around $x_0$, show that the inequality (\ref{des:psi:Q}) holds if $\e$ is small. In order to simplify the asymptotic analysis, we use conformal Fermi coordinates centered at $x_0$. This concept, introduced in \cite{coda1}, plays the same role the conformal normal coordinates (see \cite{lee-parker}) did in the case of manifolds without boundary. 

When $n\geq 9$, the strict inequality (\ref{des:psi:Q}) was proved in \cite{coda1}. 
The difficulty arises because, when $3\leq n\leq 8$, the first correction term in the expansion does not have the right sign. When $3\leq n\leq 5$, Escobar proved the strict inequality by applying the Positive Mass Theorem, a global construction originally due to Schoen (\cite{schoen1}). 
This argument does not work when $6\leq n\leq 8$ because the metric is not sufficiently flat around the point $x_0$.

As we have mentioned before, the situation under the hypothesis of Theorem \ref{exist:thm} is much similar to the cases of dimensions $4$ and $5$ when the boundary is not umbilic, solved by Marques in \cite{coda2}. 
As he pointed out, the test functions $U_{\e}$ are not optimal in these cases but the problem is still local. This kind of phenomenon does not appear in the classical solution of the Yamabe problem for manifolds without boundary. However, perturbed test functions have already been used in the works of Hebey and Vaugon (\cite{hebey-vaugon}), Brendle (\cite{brendle3}) and Khuri, Marques and Schoen (\cite{khuri-marques-schoen}).

In order to prove the inequality (\ref{des:psi:Q}), inspired by the ideas of Marques, we introduce
\begin{equation}\notag
\phi_{\e}(x)=\e^{\frac{n-2}{2}}R_{ninj}(x_0)x_ix_jx_n^2\left(x_1^2+...+x_{n-1}^2+(\e+x_n)^2\right)^{-\frac{n}{2}}\,.
\end{equation}
Our test function $\psi$ is defined as 
$
\psi=U_{\e}+\phi_{\e}
$
around $x_0\in\d M$.
%

In section 2 we write expansions for the metric $g$ in Fermi coordinates and discuss the concept of conformal Fermi coordinates. In section 3 we prove Theorem \ref{exist:thm} by estimating $Q(\psi)$.
\\\\
{\bf{Notations.}}

Throughout this work we will make use of the index notation for tensors, commas denoting covariant differentiation. We will adopt the summation convention whenever confusion is not possible. When dealing with Fermi coordinates, we will use indices $1\leq i,j,k,l,m,p,r,s\leq n-1$ and $1\leq a,b,c,d\leq n$.  Lines under or over an object mean the restriction of the metric to the boundary is involved.

We 
set $\det g=\det g_{ab}$. We will denote by $\nabla_g$ or $\nabla$ the covariant derivative and by $\Delta_g$ or $\Delta$ the Laplacian-Beltrami operator. The full curvature tensor will be denoted by $R_{abcd}$,  the Ricci tensor by $R_{ab}$ and the scalar curvature by $R_g$ or $R$. The second fundamental form of the boundary will be denoted by $h_{ij}$ and the mean curvature,  $\frac{1}{n-1}tr (h_{ij})$, by $h_g$ or $h$. By $W_{abcd}$ we will denote the Weyl tensor.

By $\Rn$ we will denote the half-space $\{x=(x_1,...,x_n)\in \R^n;\:x_n\geq 0\}$. If $x\in\Rn$ we set $\bar{x}=(x_1,...,x_{n-1})\in\R^{n-1}\cong \d\R^{n}$. We will denote by $B_{\delta}^+(0)$ (or $B^+_{\delta}$ for short) the half-ball $B_{\delta}(0)\cap\Rn$, where $B_{\delta}(0)$ is the Euclidean open  ball of radius $\delta>0$ centered at the origin of $\R^n$.  
Given a subset $\mathcal{C}\subset \Rn$, we set $\d^+\mathcal{C}=\d \mathcal{C}\cap \Rn$ and $\d '\mathcal{C}= \mathcal{C}\cap\d \Rn$.

The volume forms of $M$ and $\d M$ will be denoted by $dv_g$ and $d\sigma_g$, respectively.
The n-dimensional sphere of radius $r$ centered at the origin of $\R^{n+1}$ will be denoted by $S^{n}_r$. By $\sigma_{n}$ we will denote the volume of the n-dimensional unit sphere $S^n_1$. 

For $\mathcal{C}\subset M$, we define the energy of a function $u$ in $\mathcal{C}$ by 
$$
E_{\mathcal{C}}(u)
=\int_{\mathcal{C}}\left(|\nabla_g u|^2+\frac{n-2}{4(n-1)}R_gu^2\right)dv_g
+\frac{n-2}{2}\int_{\d '\mathcal{C}}h_gu^2d\sigma_g\,.
$$

\bigskip\noindent
{\bf{Acknowledgements.}}
The content of this paper is a part of the author's doctoral thesis 
(\cite{almaraz1}) at IMPA. 
The author would like to express his gratitude to his advisor Prof. Fernando C. Marques for numerous mathematical conversations and constant 
encouragement. While the author was at IMPA, he was fully supported by CNPq-Brazil.


\section{Coordinate expansions for the metric}

In this section we will write expansions for the metric $g$ in Fermi coordinates. We will also discuss the concept of conformal Fermi coordinates, introduced by Marques in \cite{coda1}, that will simplify the computations of the next section. The conformal Fermi coordinates play the same role that the conformal normal coordinates (see \cite{lee-parker}) did in the case of manifolds without boundary. The results of this section are basically proved on pages 1602-1609 and 1618 of \cite{coda1}. 
\begin{definition}
Let $x_0\in\d M$. We choose geodesic normal coordinates $(x_1,...,x_{n-1})$ on the boundary, centered at $x_0$.
We say that $(x_1,...,x_n)$, for small $x_n\geq 0$, are the Fermi coordinates (centered at $x_0$) of the point $\exp_{x}(x_n\eta(x))\in M$. Here, we denote by $\eta(x)$ the inward unit vector normal to $\d M$ at $x\in\d M$.
\end{definition}

It is easy to see that in these coordinates $g_{nn}\equiv 1$ and $g_{jn}\equiv 0$, for $j=1,...,n-1$.

We fix $x_0\in\d M$.
The existence of conformal Fermi coordinates is stated as follows:
\begin{proposition}\label{conf:fermi:thm}
For any given integer $N\geq 1$ there is a metric $\tilde{g}$, conformal to $g$, such that in $\tilde{g}$-Fermi coordinates centered at $x_0$
$$
\det \tilde{g}(x)=1+O(|x|^N)\,.
$$
Moreover, $h_{\tilde{g}}(x)=O(|x|^{N-1})$.
\end{proposition}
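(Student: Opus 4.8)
The plan is to construct the conformal factor iteratively, killing the Taylor coefficients of $\det\tilde g$ order by order up to order $N$, while simultaneously ensuring the mean curvature of the boundary vanishes to the required order. The key structural fact I would exploit is conformal freedom: under a conformal change $\tilde g = f^{4/(n-2)}g$ (or more conveniently $\tilde g = e^{2\omega}g$) with $f>0$, one has $\det\tilde g = e^{2n\omega}\det g$, so $\log\det\tilde g = 2n\omega + \log\det g$ in any fixed coordinate chart. The subtlety is that changing $g$ conformally also changes the Fermi coordinates themselves, since the normal exponential map and the geodesics on the boundary both depend on the metric. So I cannot simply solve $\log\det g$ away pointwise; instead I must track how the coordinate system co-moves with the conformal factor.

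First I would set up the problem as follows. Write $\omega = \sum_{1\le|\gamma|\le N} c_\gamma x^\gamma$ as a polynomial of degree $N$ with undetermined coefficients (the constant term is irrelevant, and I may normalize $\omega(x_0)=0$). The strategy, following the conformal normal coordinates construction of Lee--Parker in \cite{lee-parker} and its boundary adaptation in \cite{coda1}, is to show that the map sending the coefficients $c_\gamma$ of $\omega$ to the Taylor coefficients of $\log\det\tilde g$ (expressed in $\tilde g$-Fermi coordinates) is, at the level of its linearization, surjective onto the space of polynomial coefficients we wish to annihilate. Concretely, at leading order in each degree the coordinate change is a higher-order perturbation, so the degree-$k$ part of $\log\det\tilde g$ equals $2n$ times the degree-$k$ part of $\omega$ plus terms determined by lower-degree coefficients already fixed. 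This triangular structure lets me solve degree by degree: having arranged $\det\tilde g = 1 + O(|x|^k)$, I choose the degree-$k$ homogeneous part of $\omega$ to cancel the degree-$k$ coefficient of $\log\det\tilde g$, pushing the error to $O(|x|^{k+1})$.

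The boundary condition $h_{\tilde g}(x)=O(|x|^{N-1})$ must be handled in tandem, and this is where the genuinely boundary-specific work lies. The transformation law for the mean curvature under $\tilde g=e^{2\omega}g$ reads $h_{\tilde g} = e^{-\omega}\bigl(h_g + \tfrac{\partial\omega}{\partial\eta}\bigr)$ along $\d M$, so the normal derivative of $\omega$ at the boundary controls $h_{\tilde g}$. The point is that the Taylor coefficients of $\omega$ split into those tangential to $\d M$ (which affect $\det\tilde g$ through the boundary restriction) and those involving $\partial_n$; I have enough free coefficients to address both the interior determinant condition and the boundary mean-curvature condition without conflict, because they are governed by complementary pieces of the jet of $\omega$. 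Since $\d M$ is a hypersurface, the restriction $\det\tilde g|_{x_n=0}$ and the normal-derivative data are independent inputs in the jet of $\omega$, so I can solve them simultaneously at each order.

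The main obstacle I anticipate is bookkeeping the coordinate change correctly: because the Fermi coordinates are rebuilt for $\tilde g$ (geodesic normal coordinates on $(\d M,\tilde g|_{\d M})$ together with the $\tilde g$-normal exponential flow), the passage from $g$-coordinates to $\tilde g$-coordinates introduces a nonlinear, nonlocal dependence on $\omega$. I would control this by verifying that this coordinate correction is always of strictly higher order than the term being cancelled at each stage, so that the induction on the degree $k$ closes and the linearized solvability at each order is genuinely triangular. Establishing this higher-order-perturbation claim carefully — rather than the algebra of solving for the $c_\gamma$, which is routine once triangularity is known — is the crux, and it is exactly the content carried out on the cited pages of \cite{coda1}, to which I would appeal for the detailed verification.
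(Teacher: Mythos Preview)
Your treatment of the first assertion (the determinant condition) is essentially what the paper does: it simply cites Proposition~3.1 of \cite{coda1}, whose proof is the Lee--Parker-style inductive construction you outline, with the same triangularity observation about the coordinate change being a higher-order perturbation. So that part is fine.

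Where you diverge from the paper is in the second assertion. You propose to impose the condition $h_{\tilde g}=O(|x|^{N-1})$ \emph{in tandem} with the determinant condition, using the conformal transformation law $h_{\tilde g}=e^{-\omega}(h_g+\partial_\eta\omega)$ and arguing that the tangential and normal parts of the jet of $\omega$ give enough independent degrees of freedom. This is unnecessary, and you have not actually verified that the two systems of constraints are compatible at every order. The paper's argument is a one-line consequence of the first statement: in Fermi coordinates one has $g_{nn}\equiv 1$, $g_{jn}\equiv 0$, hence
\[
h_{\tilde g}=\frac{-1}{2(n-1)}\,\tilde g^{ij}\tilde g_{ij,n}=\frac{-1}{2(n-1)}\,(\log\det\tilde g)_{,n}\,.
\]
Once $\det\tilde g=1+O(|x|^N)$ in $\tilde g$-Fermi coordinates, differentiating in $x_n$ gives $h_{\tilde g}=O(|x|^{N-1})$ immediately. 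So the mean-curvature statement is not an additional constraint to be engineered into the construction; it is automatic from the determinant condition. Your route would presumably work, but it obscures this point and leaves an unproven compatibility claim where none is needed.
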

The first statement of Proposition \ref{conf:fermi:thm} is Proposition 3.1 of \cite{coda1}. The second one follows from the equation
\begin{equation}\notag
h_g=\frac{-1}{2(n-1)}g^{ij}g_{ij,\,n}=\frac{-1}{2(n-1)}(\log\det g)_{,\,n}\,.
\end{equation}
The next three lemmas will also be used in the computations of the next section.
\begin{lemma}\label{exp:g}
Suppose that $\d M$ is umbilic. Then, in conformal Fermi coordinates centered at $x_0$, $h_{ij}(x)=O(|x|^N)$, where $N$ can be taken arbirarily large, and
\begin{align}
g^{ij}(x)=\;&\delta_{ij}
+\frac{1}{3}\bar{R}_{ikjl}x_kx_l+R_{ninj}x_n^2
+\frac{1}{6}\bar{R}_{ikjl;\,m}x_kx_lx_m
+R_{ninj;\,k}x_n^2x_k+\frac{1}{3}R_{ninj;\,n}x_n^3\notag
\\
&\hspace{1.5cm}+\left(\frac{1}{20}\bar{R}_{ikjl;\,mp}+\frac{1}{15}\bar{R}_{iksl}\bar{R}_{jmsp}\right)x_kx_lx_mx_p\notag
\\
&\hspace{1.4cm}+\left(\frac{1}{2}R_{ninj;\,kl}+\frac{1}{3}Sym_{ij}(\bar{R}_{iksl}R_{nsnj})\right)x_n^2x_kx_l\notag
\\
&\hspace{0.5cm}+\frac{1}{3}R_{ninj;\,nk}x_n^3x_k
+\left(\frac{1}{12}R_{ninj;\,nn}+\frac{2}{3}R_{nins}R_{nsnj}\right)x_n^4+\;O(|x|^5)\,.\notag
\end{align}
\noindent
Here, every coefficient is computed at $x_0$.
\end{lemma}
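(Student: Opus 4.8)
The plan is to compute the Taylor expansion of the covariant metric $g_{ab}$ directly in Fermi coordinates and then invert the resulting matrix, using the two defining features of these coordinates: the slice $\{x_n=0\}$ carries intrinsic geodesic normal coordinates for $\partial M$, while the normal direction is governed by the radial structure equations, since $g_{nn}\equiv 1$ and $g_{jn}\equiv 0$. The first step is to dispose of the second fundamental form. Umbilicity is conformally invariant, so in $\tilde g$-Fermi coordinates one still has $h_{ij}=h_{\tilde g}\,g_{ij}$; combining this with $h_{\tilde g}(x)=O(|x|^{N-1})$ from Proposition \ref{conf:fermi:thm} and the freedom to enlarge $N$ gives $h_{ij}(x)=O(|x|^N)$. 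This is the crucial input: it forces the second fundamental form of each boundary slice to vanish to arbitrarily high order at $x_n=0$, so that every contribution of $h_{ij}$ to the expansion (which would otherwise appear at all orders) disappears, leaving only terms built from the ambient and intrinsic curvatures.

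Next I would produce the normal expansion. Writing $\pi_{ij}(x_n)$ for the second fundamental form of the slice $\{x_n=\mathrm{const}\}$, the Fermi structure gives $\partial_n g_{ij}=-2\pi_{ij}$ together with a Riccati equation expressing $\partial_n\pi_{ij}$ through $\pi_{ik}g^{kl}\pi_{lj}$ and $R_{ninj}$. Since the initial data $\pi_{ij}(0)=h_{ij}=O(|x|^N)$ is negligible, integrating this hierarchy in $x_n$ yields (up to the sign conventions of the paper) $\partial_n^2 g_{ij}|_{0}=-2R_{ninj}$, $\partial_n^3 g_{ij}|_{0}=-2R_{ninj;n}$, and a fourth normal derivative combining $R_{ninj;nn}$ with a quadratic term $R_{nins}R_{nsnj}$; after Taylor-expanding the coefficients in $\bar x$ these account for the $x_n^2,x_n^3,x_n^4$ terms and for the mixed $x_n^2x_k$, $x_n^3x_k$ and $x_n^2x_kx_l$ terms. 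In parallel, the restriction to $x_n=0$ is the intrinsic metric of $\partial M$ in geodesic normal coordinates, whose classical expansion (as in \cite{lee-parker}) is
\[
g_{ij}=\delta_{ij}-\tfrac13\bar R_{ikjl}x_kx_l-\tfrac16\bar R_{ikjl;m}x_kx_lx_m+\left(-\tfrac1{20}\bar R_{ikjl;mp}+\tfrac2{45}\bar R_{iksl}\bar R_{jmsp}\right)x_kx_lx_mx_p+O(|x|^5).
\]

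Assembling the tangential and normal pieces gives $g_{ij}=\delta_{ij}+A_{ij}$ with $A_{ij}=O(|x|^2)$, and I would invert via $g^{ij}=\delta_{ij}-A_{ij}+(A^2)_{ij}-\cdots$. The sign reversal at second and third order reproduces the $\tfrac13\bar R_{ikjl}x_kx_l$, $R_{ninj}x_n^2$, $\tfrac16\bar R_{ikjl;m}x_kx_lx_m$, $R_{ninj;k}x_n^2x_k$ and $\tfrac13R_{ninj;n}x_n^3$ terms, while the matrix square $(A^2)_{ij}$ supplies the quadratic curvature corrections: for the purely tangential term the coefficient is $-\tfrac2{45}+\tfrac19=\tfrac1{15}$, giving $\tfrac1{15}\bar R_{iksl}\bar R_{jmsp}$, and the tangential-normal cross term of $(A^2)_{ij}$ gives $\tfrac13 Sym_{ij}(\bar R_{iksl}R_{nsnj})$, the $\tfrac12R_{ninj;kl}$ factor arising from the second $\bar x$-Taylor coefficient of the $x_n^2$ term. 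The normalization $\det\tilde g=1+O(|x|^N)$ from Proposition \ref{conf:fermi:thm} can finally be invoked to discard any residual trace (Ricci-type) terms.

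The main obstacle is the fourth-order bookkeeping: one must generate and collect all mixed tangential-normal contributions consistently, translate between the covariant derivatives appearing in the Riccati hierarchy and those in the final tensorial answer, and pin down the precise rational coefficients together with the symmetrization $Sym_{ij}$. What makes this tractable is exactly the vanishing $h_{ij}=O(|x|^N)$, which suppresses the Gauss--Codazzi cross terms between $\bar R$, $R$ and the second fundamental form that would otherwise contaminate every order.
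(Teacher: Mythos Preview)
Your outline is correct and is essentially the standard derivation: expand $g_{ij}$ using the intrinsic geodesic-normal expansion on $\{x_n=0\}$, propagate in the normal direction via the Riccati hierarchy $\partial_n g_{ij}=-2\pi_{ij}$, $\partial_n\pi_{ij}=(\pi^2)_{ij}-R_{ninj}$ (with the initial data $\pi_{ij}(0)=h_{ij}=O(|x|^N)$ killed by the umbilic/conformal-Fermi input), and then invert the matrix to fourth order. Your check of the purely tangential fourth-order coefficient, $-\tfrac{2}{45}+\tfrac{1}{9}=\tfrac{1}{15}$, is right, and the mechanism you describe for the mixed terms $\tfrac13\,Sym_{ij}(\bar R_{iksl}R_{nsnj})$ and $\tfrac12 R_{ninj;kl}$ is the correct one.

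Note, however, that the paper does not supply its own proof of this lemma: it states explicitly that the results of Section~2 ``are basically proved on pages 1602--1609 and 1618 of \cite{coda1}'', so there is no in-paper argument to compare against beyond the citation. What you have written is precisely the computation carried out in \cite{coda1} (Marques), specialized to the umbilic case so that all $h_{ij}$-contributions drop out. The only caveat is the one you yourself flag: the fourth-order bookkeeping (especially the $x_n^4$ coefficient $\tfrac{1}{12}R_{ninj;nn}+\tfrac23 R_{nins}R_{nsnj}$, which receives contributions both from the fourth normal derivative of $g_{ij}$ and from the $(A^2)_{ij}$ term of the inversion) must actually be executed, not just described. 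As an outline of the method there is no gap.
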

\begin{lemma}\label{conseq:fermi:conf'}
Suppose that $\d M$ is umbilic. Then, in conformal Fermi coordinates centered at $x_0$,

\vspace{0.2cm}
\noindent
(i) $\bar{R}_{kl}=Sym_{klm}(\bar{R}_{kl;\,m})=0$;
\\
(ii) $R_{nn}=R_{nn;\,k}=Sym_{kl}(R_{nn;\,kl})=0$;
\\
(iii) $R_{nn;\,n}=0$;
\\
(iv) $Sym_{klmp}(\frac{1}{2}\bar{R}_{kl;\,mp}+\frac{1}{9}\bar{R}_{ikjl}\bar{R}_{imjp})=0$;
\\
(v) $R_{nn;\,nk}=0$;
\\
(vi) $R_{nn;\,nn}+2(R_{ninj})^2=0$;
\\
(vii) $R_{ij}=R_{ninj}$;
\\
(viii) $R_{ijkn}=R_{ijkn;\,j}=0$;
\\
(ix) $R=R_{,\,j}=R_{,\,n}=0$;
\\
(x) $R_{,\,ii}=-\frac{1}{6}(\bar{W}_{ijkl})^2$;
\\
(xi) $R_{ninj;\,ij}=-\frac{1}{2}R_{;\,nn}-(R_{ninj})^2$;
\vspace{0.1cm}

\noindent
where all the quantities are computed at $x_0$.
\end{lemma}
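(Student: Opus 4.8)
The plan is to extract all eleven identities from just two sources: the determinant normalization built into conformal Fermi coordinates (Proposition \ref{conf:fermi:thm}), and the classical Gauss--Codazzi and contracted Bianchi identities together with umbilicity.

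First I would exploit Proposition \ref{conf:fermi:thm}, which gives $\det g = 1 + O(|x|^N)$, equivalently $\log\det g = O(|x|^N)$, so that every Taylor coefficient of $\log\det g$ of order below $N$ must vanish. Since $g_{nn}\equiv 1$ and $g_{jn}\equiv 0$, writing $g^{ij} = \delta_{ij} + A_{ij}$ with $A_{ij}=O(|x|^2)$ read off from Lemma \ref{exp:g}, I have $\log\det g = -\log\det(\delta + A) = -\mathrm{tr}\,A + \tfrac12\mathrm{tr}\,A^2 - \cdots$. I would then collect the coefficient of each monomial in $(x_1,\dots,x_{n-1},x_n)$ and set it to zero. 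The purely tangential monomials at orders $2$ and $3$ (for which only $-\mathrm{tr}\,A$ contributes) give $\bar R_{kl}=0$ and $Sym_{klm}(\bar R_{kl;m})=0$, i.e. (i); the monomials $x_n^2$, $x_n^2 x_k$, $x_n^3$ and $x_n^3 x_k$ give the first two parts of (ii), then (iii) and (v). The delicate cases are the order-$4$ coefficients: for the tangential monomial $x_kx_lx_mx_p$ the contributions of $-\mathrm{tr}\,A$ and $\tfrac12\mathrm{tr}\,A^2$ combine, after symmetrization, into exactly (iv); for $x_n^2x_kx_l$ the two curvature-product terms cancel after relabelling and using $R_{njni}=R_{ninj}$, leaving $Sym_{kl}(R_{nn;kl})=0$, the last part of (ii); and for $x_n^4$ they combine into (vi), the quadratic term $(R_{ninj})^2$ arising from $\tfrac12\mathrm{tr}\,A^2$ and from the term $\tfrac23R_{nins}R_{nsnj}$ already present in $A$. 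The main difficulty here is tracking these products with the correct rational coefficients through the $Sym$ operators and the Riemann symmetries.

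The identities (vii)--(ix) use the extrinsic geometry. Since $\d M$ is umbilic we have $h_{ij}=O(|x|^N)$, so the Gauss equation forces $\bar R_{ijkl}=R_{ijkl}$ at $x_0$; contracting and invoking $\bar R_{ij}=0$ from (i) yields $R_{ij}=R_{ninj}$, which is (vii). The Codazzi equation expresses the mixed components $R_{ijkn}$ through first covariant derivatives of $h_{ij}$, so $h_{ij}=O(|x|^N)$ gives $R_{ijkn}=R_{ijkn;\,j}=0$, which is (viii). For (ix), tracing (vii) together with $R_{nn}=0$ gives $R=0$, while $R_{;\,j}=R_{;\,n}=0$ follow from the contracted second Bianchi identity $R_{ab;\,b}=\tfrac12 R_{;\,a}$ combined with (viii) and the vanishing of the mixed Ricci components.

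Finally, (x) and (xi) are the second-order identities. For (x) I would use the contracted Gauss equation $R=\bar R + 2R_{nn}+O(|x|^{2N})$ on $\d M$ together with $R_{nn;\,ii}=0$ (the trace of (ii)) to reduce $R_{;\,ii}$ to the intrinsic Laplacian $\bar R_{;\,ii}$; tracing (iv) and applying the contracted Bianchi identity with $\bar R_{ij}(x_0)=0$ then gives $\bar R_{;\,ii}=-\tfrac16(\bar W_{ijkl})^2$, exactly as in the Lee--Parker computation applied to the boundary metric. For (xi) I would contract the second Bianchi identity to get $R_{ninj;\,i}=R_{nn;\,j}-R_{nj;\,n}$, differentiate tangentially, and then use $R_{nn;\,jj}=0$, the contracted Bianchi identity for $R_{nj;\,j}$, and (vi) to evaluate the normal--normal derivatives. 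The hard part, and the step most prone to sign and coefficient errors, is the commutator bookkeeping in (xi): the noncommutativity of covariant derivatives (the Ricci identity applied to $R_{nj}$), together with the quadratic term already fixed by (vi) through $R_{nn;\,nn}=-2(R_{ninj})^2$, is what produces the term $-(R_{ninj})^2$ in the stated identity, so these contributions must be carried with care to land on the asserted coefficients.
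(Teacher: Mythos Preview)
Your proposal is correct and follows essentially the same approach as the paper: for (i)--(vi) you extract the vanishing Taylor coefficients of $\log\det g$ via the expansion $-\mathrm{tr}\,A+\tfrac12\mathrm{tr}\,A^2-\cdots$, which is exactly equivalent to the paper's device of writing $g_{ij}=\exp(A_{ij})$ and using $\mathrm{tr}\,A=O(|x|^N)$; and for (vii)--(xi) you invoke Gauss--Codazzi and the (contracted) Bianchi identities, with (x) reduced to the Lee--Parker computation on the boundary in normal coordinates, just as the paper indicates. Your sketch simply supplies more of the bookkeeping that the paper defers to \cite{coda1}.
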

The idea to prove the items (i),...,(vi) of Lemma \ref{conseq:fermi:conf'} is to express $g_{ij}$ as the exponencial of a matrix $A_{ij}$. Then we just observe that $\text{trace}(A_{ij})=O(|x|^N)$ for any integer $N$ arbitrarily large. The items (vii)...(xi) are applications of the Gauss and Codazzi equations and the Bianchi identity. We should mention that the item (x) uses the fact that Fermi coordinates are normal on the boundary.
\begin{lemma}\label{anul:W}
Suppose that $\d M$ is umbilic. Then, in conformal Fermi coordinates centered at $x_0\in \d M$, $W_{abcd}(x_0)=0$ if and only if $R_{ninj}(x_0)=\bar{W}_{ijkl}(x_0)=0$.
\end{lemma}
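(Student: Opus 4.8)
The plan is to prove Lemma \ref{anul:W} by expressing the components of the Weyl tensor $W_{abcd}(x_0)$ in terms of the curvature quantities that survive in conformal Fermi coordinates, using the vanishing relations already established in Lemma \ref{conseq:fermi:conf'}. The key point is that in these special coordinates most curvature components vanish at $x_0$, so the Weyl tensor collapses to a manageable expression controlled by $R_{ninj}$ and $\bar{W}_{ijkl}$.

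First I would recall the decomposition of the Riemann tensor into the Weyl tensor plus Ricci and scalar curvature terms, namely
\begin{equation}\notag
R_{abcd}=W_{abcd}+\frac{1}{n-2}(R_{ac}g_{bd}-R_{ad}g_{bc}+R_{bd}g_{ac}-R_{bc}g_{ad})-\frac{R}{(n-1)(n-2)}(g_{ac}g_{bd}-g_{ad}g_{bc})\,.
\end{equation}
At $x_0$ the metric is Euclidean, so $g_{ab}=\delta_{ab}$ and the indices split into tangential ones ($1\leq i,j\leq n-1$) and the normal one ($n$). Now I would feed in the vanishing results from Lemma \ref{conseq:fermi:conf'}: by (ix) we have $R=0$, by (vii) the tangential Ricci is $R_{ij}=R_{ninj}$, by (ii) we have $R_{nn}=0$, and by (viii) the mixed components $R_{ijkn}$ vanish. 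Since $R=0$, the scalar-curvature term drops entirely, and the Weyl tensor is determined purely by the Riemann tensor and the Ricci corrections.

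Next I would split into cases according to how many indices are normal. Mixed components such as $W_{ijkn}$ reduce to $R_{ijkn}$ plus Ricci terms that involve only $R_{in}$; since $R_{ijkn}=0$ by (viii) and $R_{in}=0$ (this follows from the umbilic hypothesis and is implicit in the Fermi-coordinate structure via (ii) and the Codazzi relations), these vanish automatically. The genuinely substantive cases are the purely tangential block $W_{ijkl}$ and the block with two normal indices $W_{ninj}$. For the tangential block, the Gauss equation identifies the intrinsic boundary Riemann tensor $\bar{R}_{ijkl}$ with $R_{ijkl}$ up to second-fundamental-form terms which vanish on an umbilic (indeed $h_{ij}=O(|x|^N)$) boundary, so $W_{ijkl}$ can be rewritten in terms of $\bar{W}_{ijkl}$ together with $R_{ninj}$ contributions arising from the difference between the full Ricci and the boundary Ricci. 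For the two-normal block, $W_{ninj}$ is expressed through $R_{ninj}$ and the Ricci terms, again reducing to a linear combination of $R_{ninj}$ (and its trace, which vanishes by (ii)).

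The crux is to verify that each block of $W_{abcd}(x_0)$ is a nondegenerate linear expression in $R_{ninj}(x_0)$ and $\bar{W}_{ijkl}(x_0)$, so that $W(x_0)=0$ forces both to vanish, and conversely. The forward direction is immediate once the blocks are assembled, since setting the whole Weyl tensor to zero kills each block separately. The converse — that $R_{ninj}(x_0)=0$ and $\bar{W}_{ijkl}(x_0)=0$ together imply $W_{abcd}(x_0)=0$ — requires checking that no other independent curvature quantity can contribute to any component of $W$ at $x_0$; this is where I expect the main obstacle to lie, as one must be careful that the tangential-block computation, when $\bar{W}$ is reexpressed via the Gauss equation, does not secretly reintroduce a term that is not controlled by $R_{ninj}$. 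I would handle this by systematically tracing, component by component, that after imposing $R_{ninj}=0$ the tangential Ricci $R_{ij}$ vanishes (by (vii)), so the full Ricci tensor vanishes at $x_0$, whence $R_{ijkl}=W_{ijkl}=\bar{W}_{ijkl}$ by Gauss, and the assumption $\bar{W}=0$ then closes the argument in every block simultaneously.
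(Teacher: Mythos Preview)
Your proposal is correct and follows essentially the same route as the paper: split $W_{abcd}(x_0)$ by the number of normal indices, use the symmetries and Lemma~\ref{conseq:fermi:conf'} (together with the Gauss equation) to show that the only nontrivial blocks are $W_{ninj}=\tfrac{n-3}{n-2}R_{ninj}$ and $W_{ijkl}=\bar{W}_{ijkl}$ plus terms linear in $R_{ninj}$, from which the equivalence follows. The one small imprecision is your justification of $R_{in}=0$: it is cleanest to deduce this directly from item~(viii) (since $R_{in}=\sum_j R_{ijnj}$), rather than from (ii).
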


For the sake of the reader we include the proof of Lemma \ref{anul:W} here.
\begin{proof}[Proof of Lemma \ref{anul:W}] 
Recall that the  Weyl tensor is defined by
\ba\label{def:weyl}
W_{abcd}&=R_{abcd}-\frac{1}{n-2}\left(R_{ac}g_{bd}-R_{ad}g_{bc}+R_{bd}g_{ac}-R_{bc}g_{ad}\right)\notag
\\
&\hspace{1cm}+\frac{R}{(n-2)(n-1)}\left(g_{ac}g_{bd}-g_{ad}g_{bc}\right)\,.
\end{align}

By the symmetries of the Weyl tensor, $W_{nnnn}=W_{nnni}=W_{nnij}=0$. By the identity (\ref{def:weyl}) and Lemma \ref{conseq:fermi:conf'} (viii), $W_{nijk}(x_0)=0$. From the identity (\ref{def:weyl}) again and from Lemma \ref{conseq:fermi:conf'} (ii), (vii), (ix), 
$$
W_{ninj}=\frac{n-3}{n-2}R_{ninj}
$$
and
$$
W_{ijkl}=\bar{W}_{ijkl}-\frac{1}{n-2}
\left(R_{nink}g_{jl}-R_{ninl}g_{jk}+R_{njnl}g_{ik}-R_{njnk}g_{il}\right)
$$
at $x_0$. In the last equation we also used the Gauss equation and Lemma \ref{exp:g}. Now the result follows from the above equations.
\ep


\section{Estimating the Sobolev quotient}

In this section, we will prove Theorem \ref{exist:thm} by constructing a function $\psi$ such that
$$
Q(\psi)<Q(B^n,\d B)\,.
$$

We first recall that the positive number $Q(B^n, \d B)$ also appears as the best constant in the following Sobolev-trace inequality:
$$
\left(\int_{\d\Rn}|u|^{\frac{2(n-1)}{n-2}}d\bar{x}\right)^{\frac{n-2}{n-1}}
\leq \frac{1}{Q(B^n,\d B)}\int_{\Rn}|\nabla u|^2dx\,,
$$
for every $u\in H^1(\Rn)$.
It was proven by Escobar (\cite{escobar0}) and independently by Beckner (\cite{beckner}) that the equality is achieved by the functions $U_{\e}$, defined in (\ref{Ue}). They are solutions to the boundary-value problem
\begin{equation}\label{eq:Ue}
\begin{cases}
\Delta \U = 0\,,&\text{in}\:\mathbb{R}_+^n\,,\\
\frac{\partial \U}{\partial y_n}+(n-2)\U^{\frac{n}{n-2}}=0\,,&\text{on}\:\partial\mathbb{R}_+^n\,.
\end{cases}
\end{equation}
One can check, using integration by parts, that
$
\int_{\Rn}|\nabla U_{\e}|^2dx=(n-2)\int_{\d\Rn}U_{\e}^{\frac{2(n-1)}{n-2}}dx
$
and also that
\begin{equation}\label{eq:Q:Ue}
Q(B^n,\d B)=(n-2)\left(\int_{\d\Rn}U_{\e}^{\frac{2(n-1)}{n-2}}dx\right)^{\frac{1}{n-1}}\,.
\end{equation}

\noindent{\bf{Assumption.}}
In the rest of this work we will assume that $\d M$ is umbilic and there is a point $x_0\in \d M$ such that $W(x_0)\neq 0$.
\vspace{0.1cm}

Since the Sobolev quotient $Q(M,\d M)$ is a conformal invariant, we can use conformal Fermi coordinates centered at $x_0$.

\vspace{0.2cm}
\noindent{\bf{Convention.}}
In what follows, all the curvature terms are evaluated at $x_0$.  We fix conformal Fermi coordinates centered at $x_0$ and work in a half-ball $B_{2\delta}^+=B_{2\delta}^+(0)\subset\Rn$. 
\vspace{0.1cm}

In particular, for any $N$ arbitrarily large, we can write the volume element $dv_g$ as
\begin{equation}\label{eq:vol}
dv_g=(1+O(|x|^N))dx\,.
\end{equation}

In many parts of the text we will use the fact that, for any homogeneous polynomial $p_k$ of degree $k$, 
\ba\label{rel:int:pol'}
\int_{S_r^{n-2}}p_k=\frac{r^2}{k(k+n-3)}\int_{S_r^{n-2}}\Delta p_k\,.
\end{align}

We will now construct the test function $\psi$. Set
\begin{equation}\label{def:phi:e}
\phi_{\e}(x)=\e^{\frac{n-2}{2}}AR_{ninj}x_ix_jx_n^2\left((\e+x_n)^2+|\bar{x}|^2\right)^{-\frac{n}{2}}\,,
\end{equation}
for $A\in\R$ to be fixed later, and
\begin{equation}
\phi(y)=AR_{ninj}y_iy_jy_n^2\left((1+y_n)^2+|\bar{y}|^2\right)^{-\frac{n}{2}}\,.
\end{equation}
Thus, $\phi_{\e}(x)=\e^{2-\frac{n-2}{2}}\phi(\e^{-1}x)$. Set $U=U_1$. Thus, $\U(x)=\e^{-\frac{n-2}{2}}U(\e^{-1}x)$. Note that $\U(x)+\phi_{\e}(x)=(1+O(|x|^2))\U(x)$. Hence, if $\delta$ is sufficiently small,
$$
\frac{1}{2}\U\leq \U+\phi_{\e}\leq 2\U\,,\:\:\:\:\text{in}\:B_{2\delta}^+\,.
$$ 

Let $r\mapsto \chi(r)$ be a smooth cut-off function satisfying $\chi(r)=1$ for $0\leq r\leq \delta$, $\chi(r)=0$ for $r\geq 2\delta$, $0\leq \chi \leq 1$ and $|\chi '(r)|\leq C\delta^{-1}$ if $\delta\leq r\leq 2\delta$.
Our test function is defined by
$$
\psi(x)=\chi(|x|)(U_{\e}(x)+\phi_{\e}(x))\,.
$$

\subsection{Estimating the energy of $\psi$}

The energy of $\psi$ is given by 
\begin{align}\label{exist:1}
E_M(\psi)
&=\int_{M}\left(|\nabla_g\psi|^2+\frac{n-2}{4(n-1)}R_g\psi^2\right)dv_g
+\frac{n-2}{2}\int_{\d M}h_g\psi^2d\sigma_g\notag
\\
&=E_{\B}(\psi)+E_{B^+_{2\delta}\backslash\B}(\psi)\,.
\end{align}

Observe that 
$$
|\nabla_g \psi|^2\leq C|\nabla\psi|^2\leq
C|\nabla\chi|^2(\U+\phi_{\e})^2+C\chi^2|\nabla(\U+\phi_{\e})|^2\,.
$$
Hence,
\begin{align}
E_{B^+_{2\delta}\backslash B_{\delta}^+}(\psi)&\leq 
C\int_{B^+_{2\delta}\backslash B_{\delta}^+}|\nabla\chi|^2\U^2dx
+C\int_{B^+_{2\delta}\backslash B_{\delta}^+}\chi^2|\nabla\U|^2dx\notag
\\
&\hspace{0.5cm}+C\int_{B^+_{2\delta}\backslash B_{\delta}^+}R_g\U^2dx
+C\int_{\d 'B^+_{2\delta}\backslash \d 'B_{\delta}^+}h_g\U^2d\bar{x}\,,\notag
\end{align}
Thus,
\begin{equation}\label{exist:2}
E_{B^+_{2\delta}\backslash B_{\delta}^+}(\psi)\leq C\e^{n-2}\delta^{2-n}\,.
\end{equation}

The first term in the right hand side of  (\ref{exist:1}) is
\ba\label{exist:3}
E_{\B}(\psi)
&=E_{\B}(\U+\phi_{\e})\notag
\\
&=\int_{\B}\left\{|\nabla_g(\U+\phi_{\e})|^2+\frac{n-2}{4(n-1)}R_g(\U+\phi_{\e})^2\right\}dv_g\notag
\\
&\hspace{0.5cm}+\frac{n-2}{2}\int_{\d 'B^+_{\delta}}h_g(\U+\phi_{\e})^2d\sigma_g\notag
\\
&=\int_{\B}|\nabla(\U+\phi_{\e})|^2dx+C\e^{n-2}\delta
\\
&\hspace{0.5cm}+\int_{\B}(g^{ij}-\delta^{ij})\d_i(\U+\phi_{\e})\d_j(\U+\phi_{\e})dx
+\frac{n-2}{4(n-1)}\int_{\B}R_g(\U+\phi_{\e})^2dx\,.\notag
\end{align}
Here, we used the identity (\ref{eq:vol}) for the volume term and Proposition \ref{conf:fermi:thm} for the integral envolving $h_g$.

Now, we will handle each of the three integral terms in the right hand side of (\ref{exist:3}) in the next three lemmas. 
\begin{lemma}\label{lema1}
We have,
\ba
\int_{\B}|\nabla(\U+\phi_{\e})|^2dx
&\leq Q(B^n,\d B^n)\left(\int_{\d M}\psi^{\frac{2(n-1)}{n-2}}dx\right)^{\frac{n-2}{n-1}}
+C\e^{n-2}\delta^{2-n}\notag
\\
&-\frac{4}{(n+1)(n-1)}\e^4A^2(R_{ninj})^2
\int_{B_{\delta\e^{-1}}}\frac{y_n^2|\bar{y}|^{4}}{\Z^n}dy\notag
\\
&+\frac{8n}{(n+1)(n-1)}\e^4A^2(R_{ninj})^2
\int_{B_{\delta\e^{-1}}^+}\frac{y_n^3|\bar{y}|^{4}}{\Z^{n+1}}dy\notag
\\
&+\frac{12n}{(n+1)(n-1)}\e^4A^2(R_{ninj})^2
\int_{B_{\delta\e^{-1}}^+}\frac{y_n^4|\bar{y}|^{4}}{\Z^{n+1}}dy\notag
\end{align}
\end{lemma}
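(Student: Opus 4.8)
The plan is to expand the integrand and treat the three resulting pieces separately after rescaling to the variable $y=\e^{-1}x$:
\[
\int_{\B}|\nabla(\U+\phi_{\e})|^2\,dx
=\int_{\B}|\nabla\U|^2\,dx
+2\int_{\B}\nabla\U\cdot\nabla\phi_{\e}\,dx
+\int_{\B}|\nabla\phi_{\e}|^2\,dx .
\]
Under $x=\e y$ the domain becomes $\Be$ while $\U,\phi_{\e}$ become $U,\phi$; a change of variables shows that the first integral is scale invariant, the cross term carries a factor $\e^{2}$, and the last carries a factor $\e^{4}$. This already explains why the correction terms in the statement are $O(\e^{4})$.

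First I would treat $\int_{\B}|\nabla\U|^2$. Since $\Delta\U=0$, integration by parts gives $\int_{\B}|\nabla\U|^2=\int_{\d'\B}\U\,\frac{\d\U}{\d\nu}+\int_{\d^+\B}\U\,\frac{\d\U}{\d\nu}$. On the flat face the outward normal is $-e_n$, so the boundary condition in (\ref{eq:Ue}) turns the first integral into $(n-2)\int_{\d'\B}\U^{\frac{2(n-1)}{n-2}}d\bar{x}$, while the spherical face contributes $O(\e^{n-2}\delta^{2-n})$. To produce the first term on the right-hand side I would invoke (\ref{eq:Q:Ue}) in the equivalent form $Q(B^n,\d B)\big(\int_{\d\Rn}\U^{\frac{2(n-1)}{n-2}}\big)^{\frac{n-2}{n-1}}=(n-2)\int_{\d\Rn}\U^{\frac{2(n-1)}{n-2}}$, then compare $\int_{\d'\B}$ with $\int_{\d\Rn}$ (the tail being $O(\e^{n-1}\delta^{1-n})$) and compare the latter with $\int_{\d M}\psi^{\frac{2(n-1)}{n-2}}$, using that on $\d M$ one has $\psi=\chi\U$ (as $\phi_{\e}$ vanishes there) and $d\sigma_g=(1+O(|x|^N))d\bar{x}$; all discrepancies are $O(\e^{n-2}\delta^{2-n})$. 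The cross term is then disposed of by the same device: integrating by parts with $\Delta\U=0$ gives $2\int_{\d'\B}\phi_{\e}\frac{\d\U}{\d\nu}+2\int_{\d^+\B}\phi_{\e}\frac{\d\U}{\d\nu}$, and the factor $x_n^2$ in (\ref{def:phi:e}) makes $\phi_{\e}$ vanish on $\d'\B$, so only an $O(\e^{n-2}\delta^{2-n})$ spherical contribution survives.

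The core of the lemma is the term $\int_{\B}|\nabla\phi_{\e}|^2=\e^{4}\int_{\Be}|\nabla\phi|^2\,dy$. Integrating by parts and using $\phi=0$ on $\d'\Be$ (again the $y_n^2$ factor), I would reduce this to $-\e^{4}\int_{\Be}\phi\,\Delta\phi\,dy$ up to a spherical-face error that is $O(\e^{n-2}\delta^{2-n})$ after the $\e^4$. Writing $\phi=A\,P\,f$ with $P=R_{ninj}y_iy_j$ and $f=y_n^2\,\Z^{-n/2}$, the two key inputs are that $P$ is harmonic, since $\Delta P=2R_{nn}=0$ by Lemma \ref{conseq:fermi:conf'}(ii), and that $\Z^{-n/2}$, being the $(-n)$-th power of the distance to $(0,\dots,0,-1)$, satisfies $\Delta\,\Z^{-n/2}=2n\,\Z^{-n/2-1}$. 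A short computation then gives
\[
\Delta\phi=A\,P\big(2\,\Z^{-n/2}-4n\,y_n\,\Z^{-n/2-1}-6n\,y_n^2\,\Z^{-n/2-1}\big).
\]
To integrate $\phi\,\Delta\phi=A^2P^2y_n^2\,\Z^{-n/2}(\cdots)$ I would average $P^2$ over each sphere $\{|\bar{y}|=r\}$ using (\ref{rel:int:pol'}) twice (equivalently, the standard fourth-moment identity) together with $R_{nn}=0$, obtaining $\langle P^2\rangle=\frac{2}{(n+1)(n-1)}|\bar{y}|^{4}(R_{ninj})^2$. Substituting this and separating the three monomials in $y_n$ reproduces exactly the three integrals in the statement.

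The hard part is this last computation: getting $\Delta\phi$ right and, above all, carrying out the angular average of $P^2$ with the correct combinatorial constant, which is precisely where the trace-free identity $R_{nn}=0$ from the conformal Fermi coordinates does the essential work and produces the factor $\frac{1}{(n+1)(n-1)}$. By contrast, tracking the various $O(\e^{n-2}\delta^{2-n})$ remainders (the tail of $\U$ outside $\B$, the spherical boundary terms, and the $O(|x|^N)$ discrepancy between $d\sigma_g$ and $d\bar{x}$) is routine, though it must be verified that each discarded quantity is genuinely of the stated order.
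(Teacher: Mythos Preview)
Your proof is correct and follows essentially the same route as the paper: integrate by parts, compute $\Delta\phi$ via the product rule (using $\Delta P=2R_{nn}=0$), and evaluate the angular average of $P^2$ with the identity (\ref{rel:int:pol'}) to get the factor $\frac{2}{(n+1)(n-1)}$. The only noteworthy difference is in the cross term $2\int_{\B}\nabla\U\cdot\nabla\phi_{\e}$: you bound it by $O(\e^{n-2}\delta^{2-n})$ via integration by parts and $\phi_\e|_{\d'\B}=0$, whereas the paper observes that it vanishes \emph{exactly}, since every term in $\nabla\U\cdot\nabla\phi_\e$ carries a factor $R_{ninj}x_ix_j$ times a function of $(|\bar{x}|,x_n)$, and $\int_{S_r^{n-2}}R_{ninj}y_iy_j\,d\sigma_r=0$ by $R_{nn}=0$; similarly the paper gets the bound on $\int_{\B}|\nabla\U|^2$ as a clean inequality (using that the inward normal derivative of $\U$ on $\d^+\B$ is positive, and $\int_{\d'\B}\U^{\frac{2(n-1)}{n-2}}\le\int_{\d M}\psi^{\frac{2(n-1)}{n-2}}$) rather than tracking tail errors.
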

\bp    
Since $R_{nn}=0$ (see Lemma \ref{conseq:fermi:conf'}(ii)),
$
\int_{S_r^{n-2}}R_{ninj}y_iy_j d\sigma_r(y)=0\,.
$
Thus, we see that
\begin{equation}\label{lema1:1}
\int_{\B}|\nabla(\U+\phi_{\e})|^2dx
=\int_{\B}|\nabla\U|^2dx+\int_{\B}|\nabla\phi_{\e}|^2dx\,.
\end{equation}

Integrating by parts equations (\ref{eq:Ue}) and using the identity (\ref{eq:Q:Ue}) we obtain
$$\int_{\B}|\nabla\U|^2dx\leq 
Q(B^n,\d B^n)\left(\int_{\d '\B}\U^{\frac{2(n-1)}{n-2}}dx\right)^{\frac{n-2}{n-1}}
\leq Q(B^n,\d B^n)\left(\int_{\d M}\psi^{\frac{2(n-1)}{n-2}}dx\right)^{\frac{n-2}{n-1}}\,.$$
In the first inequality above we used the fact that $\frac{\d \U}{\d\eta}>0$ on $\d^+B^+_{\delta}$, where $\eta$ denotes the inward unit normal vector . In the second one we used the fact that $\phi_{\e}=0$ on $\d M$.

For the second term in the right hand side of (\ref{lema1:1}), 
an integration by parts plus a change of variables gives
$$\int_{\B}|\nabla\phi_{\e}|^2dx\leq -\e^4\int_{B^+_{\e^{-1}\delta}}(\Delta\phi)\phi dy+C\e^{n-2}\delta^{2-n}\,,$$
since 
$\int_{\d '\B}\frac{\d\phi_{\e}}{\d x_n}\phi_{\e} d\bar{x}=0$
and the term $\e^{n-2}\delta^{2-n}$ comes from the integral over $\d^+\B$.
\\\\
{\it{Claim.}}
The function  $\phi$ satisfies
\ba
\Delta \phi(y)&=2AR_{ninj}y_iy_j\Z^{-\frac{n}{2}}-4nAR_{ninj}y_iy_jy_n\Z^{-\frac{n+2}{2}}\notag
\\
&\hspace{1cm}-6nAR_{ninj}y_iy_jy_n^2\Z^{-\frac{n+2}{2}}\,.\notag
\end{align}

In order to prove the Claim we set $Z(y)=\Z$. Since $R_{nn}=0$, 
\ba
\Delta(R_{ninj}y_iy_jy_n^2Z^{-\frac{n}{2}})
&=\Delta(R_{ninj}y_iy_jy_n^2)Z^{-\frac{n}{2}}+R_{ninj}y_iy_jy_n^2\Delta(Z^{-\frac{n}{2}})\notag
\\
&\hspace{0.5cm}+2\d_k(R_{ninj}y_iy_jy_n^2)\d_k(Z^{-\frac{n}{2}})
+2\d_n(R_{ninj}y_iy_jy_n^2)\d_n(Z^{-\frac{n}{2}})\notag
\\
&=2R_{ninj}y_iy_jZ^{-\frac{n}{2}}+2nR_{ninj}y_iy_jy_n^2Z^{-\frac{n+2}{2}}\notag
\\
&\hspace{0.5cm}-4nR_{ninj}y_iy_jy_n^2Z^{-\frac{n+2}{2}}
-4nR_{ninj}y_iy_jy_n(y_n+1)Z^{-\frac{n+2}{2}}\notag
\\
&=2R_{ninj}y_iy_jZ^{-\frac{n}{2}}-6nR_{ninj}y_iy_jy_n^2Z^{-\frac{n+2}{2}}\notag
\\
&\hspace{1cm}-4nR_{ninj}y_iy_jy_nZ^{-\frac{n+2}{2}}\,.\notag
\end{align}
This proves the Claim.

Using the above claim,
\ba
\int_{\Be}(\Delta\phi)\phi dy&=2A^2\int_{\Be}\Z^{-n}R_{ninj}R_{nknl}y_iy_jy_ky_ly_n^2dy\notag
\\
&\hspace{0.5cm}-4nA^2\int_{\Be}\Z^{-n-1}R_{ninj}R_{nknl}y_iy_jy_ky_ly_n^3dy\notag
\\
&\hspace{0.5cm}-6nA^2\int_{\Be}\Z^{-n-1}R_{ninj}R_{nknl}y_iy_jy_ky_ly_n^4dy\,.\notag
\end{align}

Since $\Delta^2(R_{ninj}R_{nknl}y_iy_jy_ky_l)=16(R_{ninj})^2$,  
$$
\int_{S_r^{n-2}}R_{ninj}R_{nknl}y_iy_jy_ky_l d\sigma_r 
=\frac{2\sigma_{n-2}}{(n+1)(n-1)}r^{n+2}(R_{ninj})^2\,.
$$
Thus,
\ba
\int_{\Be}(\Delta\phi)\phi dy
=&\frac{4}{(n+1)(n-1)}A^2(R_{ninj})^2
\int_{B_{\delta\e^{-1}}^+}\frac {y_n^2|\bar{y}|^{4}}{\Z^n}dy\notag
\\
&\hspace{0.5cm}-\frac{8n}{(n+1)(n-1)}A^2(R_{ninj})^2
\int_{B_{\delta\e^{-1}}^+}\frac{y_n^3|\bar{y}|^{4}}{\Z^{n+1}}dy\notag
\\
&\hspace{0.5cm}-\frac{12n}{(n+1)(n-1)}A^2(R_{ninj})^2
\int_{B_{\delta\e^{-1}}^+}\frac{y_n^4|\bar{y}|^{4}}{\Z^{n+1}}dy\,.\notag
\end{align}

Hence,
\ba
\int_{\B}|\nabla\phi_{\e}|^2 dx
\leq &-\frac{4}{(n+1)(n-1)}\e^4A^2(R_{ninj})^2
\int_{B_{\delta\e^{-1}}^+}\frac {y_n^2|\bar{y}|^{4}}{\Z^n}dy\notag
\\
&\hspace{0.5cm}+\frac{8n}{(n+1)(n-1)}\e^4A^2(R_{ninj})^2
\int_{B_{\delta\e^{-1}}^+}\frac{y_n^3|\bar{y}|^{4}}{\Z^{n+1}}dy\notag
\\
&\hspace{0.5cm}+\frac{12n}{(n+1)(n-1)}\e^4A^2(R_{ninj})^2
\int_{B_{\delta\e^{-1}}^+}\frac{y_n^4|\bar{y}|^{4}}{\Z^{n+1}}dy\notag
\\
&\hspace{0.5cm}+C\e^{n-2}\delta^{2-n}\,.\notag
\end{align}
\ep   
\begin{lemma}\label{lema2}
We have,
\ba
\int_{\B}(g^{ij}-&\delta^{ij})\d_i(\U+\phi_{\e})\d_j(\U+\phi_{\e})dx
=\notag
\\
&\hspace{0.9cm}\frac{(n-2)^2}{(n+1)(n-1)}\e^4R_{ninj;\,ij}
\int_{B_{\delta\e^{-1}}^+}\frac{y_n^2|\bar{y}|^{4}}{\Z^n}dy\notag
\\
&\hspace{0.5cm}+\frac{(n-2)^2}{2(n-1)}\e^4(R_{ninj})^2
\int_{B_{\delta\e^{-1}}^+}\frac{y_n^4|\bar{y}|^{2}}{\Z^n}dy\notag
\\
&\hspace{0.5cm}-\frac{4n(n-2)}{(n+1)(n-1)}\e^4A(R_{ninj})^2
\int_{B_{\delta\e^{-1}}^+}\frac{y_n^4|\bar{y}|^{2}}{\Z^{n+1}}dy+E_1\,,\notag
\end{align}
where
\begin{equation}
E_1=
\begin{cases}\notag
O(\e^4\delta^{-4})&\text{if}\:n=6\,,
\\
O(\e^5\log (\delta\e^{-1}))&\text{if}\:n=7\,,
\\
O(\e^5)&\text{if}\:n\geq 8\,.
\end{cases}
\end{equation}
\end{lemma}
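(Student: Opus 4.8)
The plan is to substitute the expansion of $g^{ij}$ from Lemma \ref{exp:g} into the integrand and split the product of gradients into the three pieces $\partial_iU_\e\,\partial_jU_\e$, the cross term $\partial_iU_\e\,\partial_j\phi_\e$ (which occurs twice, since $g^{ij}$ is symmetric), and $\partial_i\phi_\e\,\partial_j\phi_\e$. After the change of variables $x=\e y$, every term becomes a power of $\e$ times an integral over $\Be$, and a power count isolates the leading order $\e^4$: the $\partial_iU_\e\,\partial_jU_\e$ piece pairs with the degree-four coefficients of $g^{ij}$, the cross term pairs with the degree-two coefficients, and $\partial_i\phi_\e\,\partial_j\phi_\e$ is of order $\e^6$ and is relegated to $E_1$.

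Before computing I would discard all terms that vanish by symmetry. Since each of $\partial_iU_\e$ and the tangential factor $x_k$ contributes a factor $y_i$, respectively $y_k$, every coefficient built from a $\bar R_{ik\cdots}$, antisymmetric in its first pair, is contracted against the symmetric tensor $y_iy_k$ and drops out; this eliminates the $\frac13\bar R_{ikjl}x_kx_l$ term at order $\e^2$, the $\bar R$-type quartic coefficients, and the $Sym_{ij}(\bar R_{iksl}R_{nsnj})$ piece. The remaining coefficients that are odd in $\bar y$ (the degree-three terms and $\frac13 R_{ninj;nk}x_n^3x_k$) integrate to zero over the symmetric domain $\Be$, while the pure $\e^2$ contribution of $R_{ninj}x_n^2$ paired with $\partial_iU_\e\,\partial_jU_\e$ vanishes because $R_{nn}=0$ (Lemma \ref{conseq:fermi:conf'}(ii)) makes the angular average of $R_{ninj}y_iy_j$ vanish.

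What survives at order $\e^4$ I would evaluate with the angular-average identities, notably $\int_{S^{n-2}_r}y_iy_jy_ky_l\propto(\delta_{ij}\delta_{kl}+\delta_{ik}\delta_{jl}+\delta_{il}\delta_{jk})$ together with the formula for $\int_{S^{n-2}_r}R_{ninj}R_{nknl}y_iy_jy_ky_l$ already recorded in the proof of Lemma \ref{lema1}. The coefficient $\frac12 R_{ninj;kl}x_n^2x_kx_l$ then yields, after contracting the $\delta$'s and using $Sym_{kl}(R_{nn;kl})=0$ (Lemma \ref{conseq:fermi:conf'}(ii)), the $R_{ninj;ij}$ term; the coefficient $(\frac1{12}R_{ninj;nn}+\frac23 R_{nins}R_{nsnj})x_n^4$ collapses, via $R_{nn;nn}=-2(R_{ninj})^2$ (Lemma \ref{conseq:fermi:conf'}(vi)), to $\frac12(R_{ninj})^2$ and produces the $(R_{ninj})^2 Z^{-n}$ term; and the cross term, after computing $\partial_j\phi$ explicitly and reorganizing through the algebraic identity $|\bar y|^2=\Z-(1+y_n)^2$, gives the $A(R_{ninj})^2$ term.

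Finally $E_1$ collects the $O(|x|^5)$ remainder of the metric expansion and all coefficients of degree $\ge 5$ paired with $\partial_iU_\e\,\partial_jU_\e$, the order-$\e^6$ contribution $\partial_i\phi_\e\,\partial_j\phi_\e$, and the boundary terms generated when integrating by parts in $y_n$ during the recombination of the cross-term integrals. Its size is governed by the decay over $\Be$: the radial densities, including the $|y|^{n-1}$ volume factor, behave like $|y|^{6-n}$ at infinity, so $\int_{\Be}$ converges for $n\ge 8$ (giving $O(\e^5)$), diverges logarithmically for $n=7$ (giving $O(\e^5\log(\delta\e^{-1}))$), while for $n=6$ the dominant contribution comes from the boundary/recombination pieces of order $\e^{n-2}\delta^{2-n}=\e^4\delta^{-4}$. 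I expect the main obstacle to be exactly this bookkeeping: matching the three displayed integrals forces one to reorganize the cross-term contribution through $|\bar y|^2=\Z-(1+y_n)^2$ and integration by parts in $y_n$, carefully tracking which leftover integrals cancel against the $\partial_iU_\e\,\partial_jU_\e$ terms and which must be estimated into $E_1$, with the convergence behavior switching precisely at the borderline dimensions $n=6,7$.
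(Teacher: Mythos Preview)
Your overall strategy is sound and largely parallels the paper: the same three-way split of the gradient product, the same elimination of all $\bar R$-type coefficients by the antisymmetry $\bar R_{ikjl}y_iy_k=0$, the same angular averages combined with Lemma~\ref{conseq:fermi:conf'} to reduce the surviving degree-four coefficients to $R_{ninj;ij}$ and $(R_{ninj})^2$, and the same dimensional dichotomy for $E_1$.

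The one genuine tactical difference is the cross term.  You propose to compute $\partial_j\phi$ explicitly and then recombine via $|\bar y|^2=Z-(1+y_n)^2$ and integration by parts in $y_n$.  The paper instead integrates by parts in the \emph{tangential} variables, moving $\partial_j$ off of $\phi_\e$:
\[
2\int_{\B}(g^{ij}-\delta^{ij})\,\partial_iU_\e\,\partial_j\phi_\e
=-2\int_{\B}(g^{ij}-\delta^{ij})\,\partial_i\partial_jU_\e\,\phi_\e
-2\int_{\B}(\partial_ig^{ij})\,\partial_jU_\e\,\phi_\e+O(\e^{n-2}\delta^{2-n}).
\]
Since $\partial_i\partial_jU=(n-2)\bigl(ny_iy_j-Z\delta_{ij}\bigr)Z^{-(n+2)/2}$ and the $\delta_{ij}$ part is killed by $R_{nn}=0$, the leading piece is immediately a single expression $\propto R_{ninj}R_{nknl}y_iy_jy_ky_ly_n^4Z^{-(n+1)}$, which after the spherical average (Lemma~\ref{exp_g_B}) yields the displayed $A(R_{ninj})^2$ integral in one stroke; the $(\partial_ig^{ij})$ term is already of higher order by Lemma~\ref{exp:g} and Lemma~\ref{conseq:fermi:conf'}(i).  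This bypasses your recombination step entirely.  Note also that your expectation of cancellation between cross-term leftovers and the $\partial_iU_\e\,\partial_jU_\e$ block is off: those two blocks are independent, the latter already producing exactly the first two displayed integrals (this is Lemma~\ref{exp_g_A}).  If you stick with your direct computation you will get two cross-term integrals (one with $y_n^4|\bar y|^2Z^{-n}$, one with $y_n^4|\bar y|^4Z^{-(n+1)}$), and the recombination into the single displayed form must happen entirely within the cross term, at the cost of the same $O(\e^{n-2}\delta^{2-n})$ boundary error.
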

\bp   

Observe that
\ba\label{lema2:1}
\int_{\B}(g^{ij}-\delta^{ij})&\d_i(\U+\phi_{\e})\d_j(\U+\phi_{\e})dx
=\int_{\B}(g^{ij}-\delta^{ij})\d_i\U\d_j\U dx\notag
\\
&\hspace{0.5cm}+2\int_{\B}(g^{ij}-\delta^{ij})\d_i\U\d_j\phi_{\e} dx
+\int_{\B}(g^{ij}-\delta^{ij})\d_i\phi_{\e}\d_j\phi_{\e} dx\,.
\end{align}

We will handle separately the three terms in the right hand side of (\ref{lema2:1}). The first term is
\ba
\int_{\B}(g^{ij}-\delta^{ij})(x)\d_i\U(x)&\d_j\U(x) dx
=\int_{\Be}(g^{ij}-\delta^{ij})(\e y)\d_iU(y)\d_jU(y) dy\notag
\\
&=(n-2)^2\int_{\Be}\Z^{-n}(g^{ij}-\delta^{ij})(\e y)y_iy_j dy\,.\notag
\end{align}
Hence, using Lemma \ref{exp_g_A} we obtain
\ba
\int_{\B}(g^{ij}-\delta^{ij})(x)\d_i\U(x)&\d_j\U(x) dx
=\notag
\\
&\hspace{0.9cm}\frac{(n-2)^2}{(n+1)(n-1)}\e^4R_{ninj;ij}
\int_{B_{\delta\e^{-1}}^+}\frac{y_n^2|\bar{y}|^{4}}{\Z^n}dy\notag
\\
&\hspace{0.5cm}+\frac{(n-2)^2}{2(n-1)}\e^4(R_{ninj})^2
\int_{B_{\delta\e^{-1}}^+}\frac{y_n^4|\bar{y}|^{2}}{\Z^n}dy+E'_1\,,\notag
\end{align}
where
\begin{equation}
E'_1=
\begin{cases}\notag
O(\e^4\delta)&\text{if}\:n=6\,,
\\
O(\e^5\log (\delta\e^{-1}))&\text{if}\:n=7\,,
\\
O(\e^5)&\text{if}\:n\geq 8\,.
\end{cases}
\end{equation}
The second term 
is
\ba
2&\int_{\B}(g^{ij}-\delta^{ij})(x)\d_i\U(x)\d_j\phi_{\e}(x) dx\notag
\\
&=-2\int_{\B}(g^{ij}-\delta^{ij})(x)\d_i\d_j\U(x)\phi_{\e}(x)dx\notag
-2\int_{\B}(\d_ig^{ij})(x)\d_j\U(x)\phi_{\e}(x) dx\notag
\\
&\hspace{2cm}+O(\e^{n-2}\delta^{2-n})\notag
\\
&=-2\e^{2}\int_{\Be}(g^{ij}-\delta^{ij})(\e y)\d_i\d_jU(y)\phi(y)dy
-2\e^3\int_{\Be}(\d_ig^{ij})(\e y)\d_jU(y)\phi(y) dy\notag
\\
&\hspace{2cm}
+O(\e^{n-2}\delta^{2-n})\,.
\end{align}
But,
\ba\label{exist:aux:1}
-2\e^2\int_{\Be}&(g^{ij}-\delta^{ij})(\e y)\d_i\d_jU(y)\phi(y) dy\notag
\\
&=-2(n-2)\e^2A\int_{\Be}\Z^{-n-1}(g^{ij}-\delta^{ij})(\e y)
\\
&\hspace{3.5cm}\cdot\left\{ny_iy_j-\Z\delta_{ij}\right\}R_{nknl}y_ky_ly_n^2dy\notag
\\
&=-\frac{4n(n-2)}{(n+1)(n-1)}\e^4A(R_{ninj})^2
\int_{B_{\delta\e^{-1}}^+}\frac{y_n^4|\bar{y}|^{4}}{\Z^{n+1}}dy+E'_2\,,\notag
\end{align}
where
\begin{equation}
E'_2=
\begin{cases}\notag
O(\e^4\delta)&\text{if}\:n=6\,,
\\
O(\e^5\log (\delta\e^{-1}))&\text{if}\:n=7\,,
\\
O(\e^5)&\text{if}\:n\geq 8\,.
\end{cases}
\end{equation}
In the last equality of \ref{exist:aux:1}, we used Lemma \ref{exp_g_B} and the fact that Lemma \ref{exp:g}, together with Lemma \ref{conseq:fermi:conf'}(i),(ii),(iii), implies
$$
\int_{S^{n-2}_r}(g^{ij}-\delta^{ij})(\e y)\delta_{ij}R_{nknl}y_ky_ld\sigma_r(y)
=\int_{S^{n-2}_r}O(\e^4|y|^4)R_{nknl}y_ky_ld\sigma_r(y)\,.
$$

We also have, by  Lemma \ref{exp:g} and Lemma \ref{conseq:fermi:conf'}(i),
\begin{equation}
-2\e^3\int_{\Be}(\d_ig^{ij})(\e y)\d_jU(y)\phi(y) dy\notag
\\
=E'_3=
\begin{cases}\notag
O(\e^4\delta)&\text{if}\:n=6\,,
\\
O(\e^5\log (\delta\e^{-1}))&\text{if}\:n=7\,,
\\
O(\e^5)&\text{if}\:n\geq 8\,.
\end{cases}
\end{equation}

Hence,
\ba
2\int_{\B}&(g^{ij}-\delta^{ij})(x)\d_i\U(x)\d_j\phi_{\e}(x) dx
=E'_2+E'_3\notag
\\
&\hspace{2cm}-\frac{4n(n-2)}{(n+1)(n-1)}\e^4A(R_{ninj})^2
\int_{B_{\delta\e^{-1}}^+}\frac{y_n^4|\bar{y}|^{4}}{\Z^{n+1}}dy\,.\notag
\end{align}

Finally, the third term in the right hand side of (\ref{lema2:1}) is written as
\ba
\int_{\B}(g^{ij}-\delta^{ij})(x)\d_i\phi_{\e}(x)\d_j\phi_{\e}(x)dx
&=\e^4\int_{\Be}(g^{ij}-\delta^{ij})(\e y)\d_i\phi(y)\d_j\phi(y)dy\notag
\\
&=\begin{cases}\notag
O(\e^4\delta)&\text{if}\:n=6\,,
\\
O(\e^5\log (\delta\e^{-1}))&\text{if}\:n=7\,,
\\
O(\e^5)&\text{if}\:n\geq 8\,.
\end{cases}
\end{align}

The result now follows if we choose $\e$ small such that $\log(\delta\e^{-1})>\delta^{2-n}$.
\ep   
\begin{lemma}\label{lema3}
We have,
\ba
\frac{n-2}{4(n-1)}&\int_{\B}R_g(\U+\phi_{\e})^2dx
=\frac{n-2}{8(n-1)}\e^4R_{;\,nn}
\int_{\Be}\frac{y_n^2}{\Z^{n-2}}dy\notag
\\
&\hspace{2cm}-\frac{n-2}{24(n-1)^2}\e^4(\bar{W}_{ijkl})^2
\int_{\Be}\frac{|\bar{y}|^{2}}{\Z^{n-2}}dy+E_2\,,\notag
\end{align}
\end{lemma}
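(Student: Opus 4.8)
The plan is to expand the square $(\U+\phi_{\e})^2=\U^2+2\U\phi_{\e}+\phi_{\e}^2$ and to Taylor-expand $R_g$ about $x_0$. By Lemma \ref{conseq:fermi:conf'}(ix) we have $R=R_{,j}=R_{,n}=0$ at $x_0$, so the scalar curvature vanishes to first order and
$$
R_g(x)=\tfrac{1}{2}R_{,ij}x_ix_j+R_{,in}x_ix_n+\tfrac{1}{2}R_{,nn}x_n^2+O(|x|^3)\,,
$$
all coefficients evaluated at $x_0$ (the covariant and coordinate Hessians agree there because the first derivatives vanish). A scaling count shows that the leading contribution comes entirely from $R_g\U^2$: writing $x=\e y$ one finds $R_g\U^2\,dx=O(\e^4)$, whereas the cross terms $R_g\,\U\phi_{\e}\,dx$ and $R_g\,\phi_{\e}^2\,dx$ are of order $\e^6$ and $\e^8$ respectively and are absorbed into $E_2$, together with the volume-element correction coming from (\ref{eq:vol}).

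After the change of variables $x=\e y$ (so that $\U(x)^2=\e^{-(n-2)}\Z^{-(n-2)}$ and $dx=\e^n\,dy$), the main term becomes
$$
\frac{n-2}{4(n-1)}\e^4\int_{\Be}\Big(\tfrac{1}{2}R_{,ij}y_iy_j+R_{,in}y_iy_n+\tfrac{1}{2}R_{,nn}y_n^2\Big)\Z^{-(n-2)}\,dy\,.
$$
I would treat the three pieces separately. The normal piece $\tfrac12 R_{,nn}y_n^2$ produces directly the first term of the claim. The mixed piece $R_{,in}y_iy_n$ vanishes, since for each fixed $y_n$ the weight $\Z^{-(n-2)}$ is even in $\bar y$ while $y_i$ is odd, so the integral over $\{|\bar y|=\text{const}\}$ is zero. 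For the tangential piece I would apply the spherical identity (\ref{rel:int:pol'}) on the $(n-2)$-spheres $\{|\bar y|=r\}$: since $\D(R_{,ij}y_iy_j)=2R_{,ii}$ over the boundary indices, this replaces $R_{,ij}y_iy_j$ by $\frac{1}{n-1}R_{,ii}|\bar y|^2$ inside the integral. Substituting Lemma \ref{conseq:fermi:conf'}(x), namely $R_{,ii}=-\tfrac16(\bar W_{ijkl})^2$, then yields the second term of the claim.

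It remains to collect the error $E_2$, and this is where the care is required. The contributions to $E_2$ are the cubic and higher Taylor remainders of $R_g$, the cross terms $\U\phi_{\e}$ and $\phi_{\e}^2$, and the $O(|x|^N)$ volume factor; each is estimated by a scaling argument identical to those in Lemmas \ref{lema1} and \ref{lema2}. The dimensional trichotomy ($n=6$, $n=7$, $n\geq 8$) appears for exactly the same reason as before: the rescaled integrals $\int_{\Be}|y|^{k}\Z^{-(n-2)}\,dy$ change their convergence behaviour at $n=6,7$ and must be cut off at radius $\delta\e^{-1}$. The main obstacle is therefore the borderline dimension $n=6$, where the two surviving main integrals $\int_{\Be}y_n^2\Z^{-(n-2)}\,dy$ and $\int_{\Be}|\bar y|^2\Z^{-(n-2)}\,dy$ are themselves only logarithmically divergent; one must check that every error genuinely beats $\e^4\log(\delta\e^{-1})$ (for instance the cubic remainder of $R_g$ contributes $O(\e^4\delta)$), which is precisely what dictates the explicit form of $E_2$ and justifies the choice $\log(\delta\e^{-1})>\delta^{2-n}$ used at the close of Lemma \ref{lema2}.
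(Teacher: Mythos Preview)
Your argument is correct and follows essentially the same route as the paper: expand $(\U+\phi_{\e})^2$, use Lemma~\ref{conseq:fermi:conf'}(ix) to get $R_g=O(|x|^2)$, rescale, and average the tangential Hessian over spheres via (\ref{rel:int:pol'}) together with Lemma~\ref{conseq:fermi:conf'}(x). The paper simply packages the spherical averaging step into the auxiliary Lemma~\ref{exp_g_C}, whereas you perform it inline; the error bookkeeping and the dimensional trichotomy are handled identically.
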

where
\begin{equation}
E_2=
\begin{cases}\notag
O(\e^4\delta)&\text{if}\:n=6\,,
\\
O(\e^5\log (\delta\e^{-1}))&\text{if}\:n=7\,,
\\
O(\e^5)&\text{if}\:n\geq 8\,.
\end{cases}
\end{equation}
\bp 
We first observe that
\begin{equation}\label{exist:aux:2}
\int_{\B}R_g(\U+\phi_{\e})^2dx
=\int_{\B}R_g\U^2dx+2\int_{\B}R_g\U\phi_{\e} dx+\int_{\B}R_g\phi_{\e}^2dx\,.
\end{equation}

We will handle each term in the right hand side of (\ref{exist:aux:2}) separately. Using Lemma \ref{exp_g_C}, we see that the first term is
\ba
\int_{\B}R_g(x)\U(x)^2dx
&=\e^2\int_{\Be}R_g(\e y)U^2(y)dy\notag
\\
\\
&=\frac{1}{2}\e^4R_{;\,nn}
\int_{\Be}\frac{y_n^2}{\Z^{n-2}}dy
+E'_4\notag
\\
&\hspace{0.5cm}-\frac{1}{12(n-1)}\e^4(\bar{W}_{ijkl})^2
\int_{\Be}\frac{|\bar{y}|^{2}}{\Z^{n-2}}dy\,,\notag
\end{align}
where 
\begin{equation}
E'_4=
\begin{cases}\notag
O(\e^4\delta)&\text{if}\:n=6\,,
\\
O(\e^5\log (\delta\e^{-1}))&\text{if}\:n=7\,,
\\
O(\e^5)&\text{if}\:n\geq 8\,.
\end{cases}
\end{equation}

By Lemma \ref{conseq:fermi:conf'}(ix), the second term is
\ba
2\int_{\B}R_g(x)\U(x)\phi_{\e}(x)dx
&=2\e^4\int_{\Be}R_g(\e y)U(y)\phi(y)dy\notag
\\
&=\begin{cases}\notag
O(\e^4\delta)&\text{if}\:n=6\,,
\\
O(\e^5\log (\delta\e^{-1}))&\text{if}\:n=7\,,
\\
O(\e^5)&\text{if}\:n\geq 8
\end{cases}
\end{align}
and the last term is
\begin{equation}
\int_{\B}R_g\phi_{\e}^2dx=
\begin{cases}\notag
O(\e^4\delta)&\text{if}\:n=6\,,
\\
O(\e^5\log (\delta\e^{-1}))&\text{if}\:n=7\,,
\\
O(\e^5)&\text{if}\:n\geq 8\,.
\end{cases}
\end{equation}
\ep   


\subsection{Proof of Theorem \ref{exist:thm}}

Now, we proceed to the proof of Theorem \ref{exist:thm}.
\begin{proof}[Proof of Theorem \ref{exist:thm}]
It folows from  Lemmas \ref{lema1}, \ref{lema2} and \ref{lema3} and the identities (\ref{exist:1}), (\ref{exist:2}) and (\ref{exist:3}) that
\begin{align}\label{exist:4}
E_{M}(\psi)&\leq Q(B^n,\d B^n)\left(\int_{\d M}\psi^{\frac{2(n-1)}{n-2}}\right)^{\frac{n-2}{n-1}}
+E\notag
\\
&\hspace{0.5cm}-\e^4
\frac{4A^2}{(n+1)(n-1)}(R_{ninj})^2
\int_{\Be}\frac{y_n^2|\bar{y}|^{4}}{\Z^n}dy\notag
\\
&\hspace{0.5cm}+\e^4
\frac{(n-2)^2}{(n+1)(n-1)}R_{ninj;\,ij}
\int_{\Be}\frac{y_n^2|\bar{y}|^{4}}{\Z^n}dy\notag
\\
&\hspace{0.5cm}+\e^4
\frac{8nA^2}{(n+1)(n-1)}(R_{ninj})^2\int_{\Be}\frac{y_n^3|\bar{y}|^{4}}{\Z^{n+1}}dy\notag
\\
&\hspace{0.5cm}+\e^4
\frac{12nA^2}{(n+1)(n-1)}
(R_{ninj})^2\int_{\Be}\frac{y_n^4|\bar{y}|^{4}}{\Z^{n+1}}dy\notag
\\
&\hspace{0.5cm}-\e^4
\frac{4n(n-2)A}{(n+1)(n-1)}
(R_{ninj})^2\int_{\Be}\frac{y_n^4|\bar{y}|^{4}}{\Z^{n+1}}dy\notag
\\
&\hspace{0.5cm}+\e^4
\frac{(n-2)^2}{2(n-1)}(R_{ninj})^2\int_{\Be}\frac{y_n^4|\bar{y}|^{2}}{\Z^{n}}dy\notag
\\
&\hspace{0.5cm}+\e^4\frac{n-2}{8(n-1)}
R_{\,;\,nn}\int_{\Be}\frac{y_n^2}{\Z^{n-2}}dy\notag
\\
&\hspace{0.5cm}-\e^4\frac{n-2}{48(n-1)^2}
(\bar{W}_{ijkl})^2\int_{\Be}\frac{|\bar{y}|^{2}}{\Z^{n-2}}dy\,.
\end{align}
where
\begin{equation}
E=
\begin{cases}\notag
O(\e^4\delta^{-4})&\text{if}\:n=6\,,
\\
O(\e^5\log (\delta\e^{-1}))&\text{if}\:n=7\,,
\\
O(\e^5)&\text{if}\:n\geq 8\,.
\end{cases}
\end{equation}

We divide the rest of the proof in two cases.
\\\\
{\underline{The case $n=7,8$.}} 

Set $I=\int_0^{\infty}\frac{r^n}{(r^2+1)^n}dr$. We will apply the change of variables $\bar{z}=(1+y_n)^{-1}\bar{y}$ and Lemmas \ref{int:partes} and \ref{rel:int:t} in order to compare the different integrals in the expansion (\ref{exist:4}).

These integrals are
\ba
I_1=\int_{\Rn}\frac{y_n^2|\bar{y}|^{4}}{\Zt^{n}}dy_nd\bar{y}
&=\int_{0}^{\infty}y_n^2(1+y_n)^{3-n}dy_n\int_{\Rnn}\frac{|\bar{z}|^{4}}{(1+|\bar{z}|^2)^n}d\bar{z}\notag
\\
&=\frac{2(n+1)\,\sigma_{n-2}\,I}{(n-3)(n-4)(n-5)(n-6)}\,,\notag
\end{align}
\ba
I_2=\int_{\Rn}\frac{y_n^3|\bar{y}|^{4}}{\Zt^{n+1}}dy_nd\bar{y}
&=\int_{0}^{\infty}y_n^3(1+y_n)^{1-n}dy_n\int_{\Rnn}\frac{|\bar{z}|^{4}}{(1+|\bar{z}|^2)^{n+1}}d\bar{z}\notag
\\
&=\frac{3(n+1)\,\sigma_{n-2}\,I}{n(n-2)(n-3)(n-4)(n-5)}\,,\notag
\end{align}
\ba
I_3=\int_{\Rn}\frac{y_n^4|\bar{y}|^{4}}{\Zt^{n+1}}dy_nd\bar{y}
&=\int_{0}^{\infty}y_n^4(1+y_n)^{1-n}dy_n\int_{\Rnn}\frac{|\bar{z}|^{4}}{(1+|\bar{z}|^2)^{n+1}}d\bar{z}\notag
\\
&=\frac{12(n+1)\,\sigma_{n-2}\,I}{n(n-2)(n-3)(n-4)(n-5)(n-6)}\,,\notag
\end{align}
\ba
I_4=\int_{\Rn}\frac{y_n^4|\bar{y}|^{2}}{\Zt^{n}}dy_nd\bar{y}
&=\int_{0}^{\infty}y_n^4(1+y_n)^{1-n}dy_n\int_{\Rnn}\frac{|\bar{z}|^{2}}{(1+|\bar{z}|^2)^{n}}d\bar{z}\notag
\\
&=\frac{24\,\sigma_{n-2}\,I}{(n-2)(n-3)(n-4)(n-5)(n-6)}\notag
\end{align}
and
\ba
I_5=\int_{\Rn}\frac{y_n^2}{\Zt^{n-2}}dy_nd\bar{y}
&=\int_{0}^{\infty}y_n^2(1+y_n)^{3-n}dy_n\int_{\Rnn}\frac{1}{(1+|\bar{z}|^2)^{n-2}}d\bar{z}\notag
\\
&=\frac{8(n-2)\,\sigma_{n-2}\,I}{(n-3)(n-4)(n-5)(n-6)}\,.\notag
\end{align}
Thus,
\begin{align}\label{exist:4'}
E_{M}(\psi)&\leq Q(B^n,\d B^n)\left(\int_{\d M}\psi^{\frac{2(n-1)}{n-2}}\right)^{\frac{n-2}{n-1}}
+E'\notag
\\
&\hspace{0.5cm}+\e^4
\left\{-\frac{4A^2}{(n+1)(n-1)}I_1+\frac{8nA^2}{(n+1)(n-1)}I_2+\frac{(n-2)^2}{2(n-1)}I_4\right\}
(R_{ninj})^2\notag
\\
&\hspace{0.5cm}+\e^4
\left\{\frac{12nA^2}{(n+1)(n-1)}-\frac{4n(n-2)A}{(n+1)(n-1)}\right\}I_3\cdot
(R_{ninj})^2\notag
\\
&\hspace{0.5cm}+\e^4\frac{(n-2)^2}{(n+1)(n-1)}I_1\cdot R_{ninj;\,ij}+\e^4\frac{n-2}{8(n-1)}I_5\cdot
R_{\,;\,nn}\notag
\\
&\hspace{0.5cm}-\e^4\frac{n-2}{48(n-1)^2}
(\bar{W}_{ijkl})^2\int_{\Rn}\frac{|\bar{y}|^{2}}{\Z^{n-2}}dy\,.
\end{align}
where
\begin{equation}
E'=
\begin{cases}\notag
O(\e^5\log (\delta\e^{-1}))&\text{if}\:n=7\,,
\\
O(\e^5)&\text{if}\:n= 8\,.
\end{cases}
\end{equation}

Using Lemma \ref{conseq:fermi:conf'}(xi) and substituting the expressions obtained for $I_1$,...,$I_5$ in the expansion (\ref{exist:4'}), the coefficients of $R_{ninj;\,ij}$ and $R_{\,;\,nn}$ cancel out and we obtain
\ba\label{exist:5}
E_{M}(\psi)&\leq Q(B^n,\d B^n)\left(\int_{\d M}\psi^{\frac{2(n-1)}{n-2}}\right)^{\frac{n-2}{n-1}}
+E'\notag
\\
&\hspace{0.5cm}+\e^4\sigma_{n-2}I\cdot \gamma\left\{16(n+1)A^2-48(n-2)A+2(8-n)(n-2)^2\right\}(R_{ninj})^2\notag
\\
&\hspace{0.5cm}-\e^4\frac{n-2}{48(n-1)^2}
(\bar{W}_{ijkl})^2\int_{\Rn}\frac{|\bar{y}|^{2}}{\Z^{n-2}}dy\,,
\end{align}
where
$$
\gamma=\frac{1}{(n-1)(n-2)(n-3)(n-4)(n-5)(n-6)}\,.
$$

Choosing $A=1$, the term $16(n+1)A^2-48(n-2)A+2(8-n)(n-2)^2$ in the expansion (\ref{exist:5}) is $-62$ for $n=7$ and $-144$ for $n=8$. Thus, for small $\e$, since $W_{abcd}(x_0)\neq 0$, the expansion (\ref{exist:5}) together with Lemma \ref{anul:W} implies that
$$
E_{M}(\psi)<Q(B^n,\d B^n)\left(\int_{\d M}\psi^{\frac{2(n-1)}{n-2}}\right)^{\frac{n-2}{n-1}}
$$
for dimensions $7$ and $8$.
\\\\
{\underline{The case $n=6$.}} 

We will again apply the change of variables $\bar{z}=(1+y_n)^{-1}\bar{y}$ and Lemma \ref{int:partes} in order to compare the different integrals in the expansion (\ref{exist:4}). In the next estimates we are always assuming $n=6$.

In this case, the first integral is
\ba
I_{1,\delta/\e}=\int_{\Be}\frac{y_n^2|\bar{y}|^{4}}{\Zt^{n}}dy_nd\bar{y}
&=\int_{\Be\cap\{y_n\leq\delta/2\e\}}\frac{y_n^2|\bar{y}|^{4}}{\Zt^{n}}dy_nd\bar{y}+O(1)\notag
\\
&=\int_{\Rn\cap\{y_n\leq\delta/2\e\}}\frac{y_n^2|\bar{y}|^{4}}{\Zt^{n}}dy_nd\bar{y}+O(1)\,.\notag
\end{align}
Hence,
\ba
I_{1,\delta/\e}
&=\int_{0}^{\delta/2\e}y_n^2(1+y_n)^{3-n}dy_n\int_{\Rnn}\frac{|\bar{z}|^{4}}{(1+|\bar{z}|^2)^n}d\bar{z}
+O(1)\notag
\\
&=\log (\delta\e^{-1})\frac{n+1}{n-3}\sigma_{n-2}\,I+O(1)\,.\notag
\end{align}

The second integral is
$$
I_{2,\delta/\e}=\int_{\Be}\frac{y_n^3|\bar{y}|^{4}}{\Zt^{n+1}}dy_nd\bar{y}=O(1)\,.
$$

Similarly to $I_{1,\delta/\e}$, the others integrals are
\ba
I_{3,\delta/\e}&=\int_{\Be}\frac{y_n^4|\bar{y}|^{4}}{\Zt^{n+1}}dy_nd\bar{y}\notag
\\
&=\int_{0}^{\delta/2\e}y_n^4(1+y_n)^{1-n}dy_n\int_{\Rnn}\frac{|\bar{z}|^{4}}{(1+|\bar{z}|^2)^{n+1}}d\bar{z}
+O(1)\notag
\\
&=\log (\delta\e^{-1})\frac{n+1}{2n}\sigma_{n-2}\,I+O(1)\,,\notag
\end{align}
\ba
I_{4,\delta/\e}&=\int_{\Be}\frac{y_n^4|\bar{y}|^{2}}{\Zt^{n}}dy_nd\bar{y}\notag
\\
&=\int_{0}^{\delta/2\e}y_n^4(1+y_n)^{1-n}dy_n\int_{\Rnn}\frac{|\bar{z}|^{2}}{(1+|\bar{z}|^2)^{n}}d\bar{z}\notag
\\
&=\log (\delta\e^{-1})\sigma_{n-2}\,I+O(1)\,,\notag
\end{align}
\ba
I_{5,\delta/\e}&=\int_{\Be}\frac{y_n^2}{\Zt^{n-2}}dy_nd\bar{y}\notag
\\
&=\int_{0}^{\delta/2\e}y_n^2(1+y_n)^{3-n}dy_n\int_{\Rnn}\frac{1}{(1+|\bar{z}|^2)^{n-2}}d\bar{z}+O(1)\notag
\\
&=\log (\delta\e^{-1})\frac{4(n-2)}{n-3}\sigma_{n-2}\,I+O(1)\notag
\end{align}
and
\ba
I_{6,\delta/\e}&=\int_{\Be}\frac{|\bar{y}|^{2}}{\Z^{n-2}}dy_nd\bar{y}\notag
\\
&=\int_{0}^{\delta/2\e}(1+y_n)^{5-n}dy_n\int_{\Rnn}\frac{|\bar{z}|^2}{(1+|\bar{z}|^2)^{n-2}}d\bar{z}+O(1)\notag
\\
&=\log (\delta\e^{-1})\frac{4(n-1)(n-2)}{(n-3)(n-5)}\sigma_{n-2}\,I+O(1)\,.\notag
\end{align}
Thus,
\begin{align}\label{exist:6}
E_{M}(\psi)&\leq Q(B^n,\d B^n)\left(\int_{\d M}\psi^{\frac{2(n-1)}{n-2}}\right)^{\frac{n-2}{n-1}}
+O(\e^4\delta^{-4})\notag
\\
&\hspace{0.5cm}+\e^4
\left\{-\frac{4A^2}{(n+1)(n-1)}I_{1,\delta/\e}
+\frac{(n-2)^2}{2(n-1)}I_{4,\delta/\e}\right\}
(R_{ninj})^2\notag
\\
&\hspace{0.5cm}+\e^4
\left\{\frac{12nA^2}{(n+1)(n-1)}-\frac{4n(n-2)A}{(n+1)(n-1)}\right\}I_{3,\delta/\e}\cdot
(R_{ninj})^2\notag
\\
&\hspace{0.5cm}+\e^4\frac{(n-2)^2}{(n+1)(n-1)}I_{1,\delta/\e}\cdot R_{ninj;\,ij}
+\e^4\frac{n-2}{8(n-1)}I_{5,\delta/\e}\cdot R_{\,;\,nn}\notag
\\
&\hspace{0.5cm}-\e^4\frac{n-2}{48(n-1)^2}I_{6,\delta/\e}\cdot(\bar{W}_{ijkl})^2\,.
\end{align}

Using Lemma \ref{conseq:fermi:conf'}(xi) and substituting the expressions obtained for $I_{1,\delta/\e}$,...,$I_{6,\delta/\e}$ in expansion (\ref{exist:6}), the coefficients of $R_{ninj;\,ij}$ and $R_{\,;\,nn}$ cancel out and we obtain
\ba\label{exist:7}
E_{M}(\psi)&\leq Q(B^n,\d B^n)\left(\int_{\d M}\psi^{\frac{2(n-1)}{n-2}}\right)^{\frac{n-2}{n-1}}
+O(\e^4\delta^{-4})\notag
\\
&\hspace{0.5cm}+\e^4\log (\delta\e^{-1})\sigma_{n-2}I\cdot\notag
\\
&\hspace{1cm}\left\{\frac{6(n-3)-4}{(n-1)(n-3)}A^2-\frac{2(n-2)}{n-1}A+\frac{(n-2)^2(n-5)}{2(n-1)(n-3)}\right\}(R_{ninj})^2\notag
\\
&\hspace{0.5cm}-\e^4\log (\delta\e^{-1})\sigma_{n-2}I\frac{(n-2)^2}{12(n-1)(n-3)(n-5)}
(\bar{W}_{ijkl})^2\,.
\end{align}

Choosing $A=1$, the term $\frac{6(n-3)-4}{(n-1)(n-3)}A^2-\frac{2(n-2)}{n-1}A+\frac{(n-2)^2(n-5)}{2(n-1)(n-3)}$ in the expansion (\ref{exist:7}) is $-\frac{2}{15}$ for $n=6$. Thus, for small $\e$, since $W_{abcd}(x_0)\neq 0$, the expansion (\ref{exist:7}) together with Lemma \ref{anul:W} implies that
$$
E_{M}(\psi)<Q(B^n,\d B^n)\left(\int_{\d M}\psi^{\frac{2(n-1)}{n-2}}\right)^{\frac{n-2}{n-1}}
$$
for dimension $n=6$.
\end{proof}


\renewcommand{\theequation}{A-\arabic{equation}}
\setcounter{equation}{0}
\renewcommand{\thetheorem}{A-\arabic{theorem}}
\setcounter{theorem}{0}
\section*{Appendix A}

In this section, we will use the results of Section 2 to calculate some integrals used in the computations of Section 3. We recall that all curvature coefficients are evaluated at $x_0\in \d M$ and we are making use of conformal Fermi coordinates centered at this point.
\begin{lemma}\label{exp_g_A}
We have
\begin{align}
\int_{S^{n-2}_r}(g^{ij}-\delta^{ij})(\epsilon y)y_iy_jd\sigma_r(y)
&=\sigma_{n-2}\e^4\frac{y_n^2r^{n+2}}{(n+1)(n-1)}R_{ninj;\,ij}\notag
\\
&\hspace{0.5cm}+\sigma_{n-2}\e^4\frac{y_n^4r^{n}}{2(n-1)}(R_{ninj})^2+O(\e^5|(r,y_n)|^{n+5})\,.\notag
\end{align}
\end{lemma}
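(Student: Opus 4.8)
The plan is to substitute $x=\epsilon y$ into the expansion of Lemma \ref{exp:g}, multiply by $y_iy_j$ (summed over tangential $i,j$), and integrate term by term over the sphere $S_r^{n-2}$ in the tangential variables $\bar y$ with $y_n$ held fixed. Writing each summand of $(g^{ij}-\delta^{ij})(\epsilon y)$ as $\epsilon^m$ times a polynomial homogeneous of degree $m$ in $y$, I only need the vanishing of odd tangential moments together with the standard moments $\int_{S_r^{n-2}}y_iy_j\,d\sigma_r=\frac{r^n\sigma_{n-2}}{n-1}\delta_{ij}$ and $\int_{S_r^{n-2}}y_iy_jy_ky_l\,d\sigma_r=\frac{r^{n+2}\sigma_{n-2}}{(n-1)(n+1)}(\delta_{ij}\delta_{kl}+\delta_{ik}\delta_{jl}+\delta_{il}\delta_{jk})$. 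The goal is to show that all contributions of orders $\epsilon^2$ and $\epsilon^3$, as well as most of order $\epsilon^4$, drop out, leaving precisely the two displayed terms; the remainder $O(\epsilon^5|(r,y_n)|^{n+5})$ comes directly from the $O(|x|^5)$ tail of Lemma \ref{exp:g}, since on the sphere $|x|=\epsilon|(r,y_n)|$ and the tail is bounded by $C\epsilon^5|(r,y_n)|^5\cdot r^2\cdot\sigma_{n-2}r^{n-2}$.

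First I would dispose of the low-order terms. Every term carrying the intrinsic curvature $\bar R_{ikjl}$ or its derivatives contracts a factor antisymmetric in $(i,k)$ against the symmetric $y_iy_k$, hence vanishes pointwise; this kills $\frac13\bar R_{ikjl}x_kx_l$ at order $\epsilon^2$, $\frac16\bar R_{ikjl;m}x_kx_lx_m$ at order $\epsilon^3$, and the whole fourth-order bracket $(\frac1{20}\bar R_{ikjl;mp}+\frac1{15}\bar R_{iksl}\bar R_{jmsp})x_kx_lx_mx_p$. The terms $R_{ninj}x_n^2$ and $\frac13 R_{ninj;n}x_n^3$ integrate to multiples of $R_{nn}$ and $R_{nn;n}$, which vanish by Lemma \ref{conseq:fermi:conf'}(ii),(iii), while $R_{ninj;k}x_n^2x_k$ and $\frac13R_{ninj;nk}x_n^3x_k$ are odd in $\bar y$ and integrate to zero. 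Finally, the piece $\frac13 Sym_{ij}(\bar R_{iksl}R_{nsnj})x_n^2x_kx_l$ is discarded as well: applying the fourth moment, each of the three index pairings either collapses one contraction of $\bar R$ to the boundary Ricci tensor or annihilates it by antisymmetry, and $\bar R_{kl}=0$ by Lemma \ref{conseq:fermi:conf'}(i).

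The two surviving pieces are $\frac12 R_{ninj;kl}x_n^2x_kx_l$ and $(\frac1{12}R_{ninj;nn}+\frac23 R_{nins}R_{nsnj})x_n^4$. For the first, the fourth moment produces the contractions $R_{nn;kk}$, $R_{ninj;ij}$ and $R_{ninj;ji}$; the first vanishes as the trace of the symmetric tensor $Sym_{kl}(R_{nn;kl})=0$ (Lemma \ref{conseq:fermi:conf'}(ii)), while relabeling the dummy indices and using $R_{ninj}=R_{njni}$ gives $\sum_{ij}R_{ninj;ji}=\sum_{ij}R_{ninj;ij}$, so the two equal contractions combine with the factor $\frac12$ to yield exactly $\frac{y_n^2 r^{n+2}\sigma_{n-2}}{(n-1)(n+1)}R_{ninj;ij}$. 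For the second piece, tracing $y_iy_j$ gives $\frac{y_n^4 r^n\sigma_{n-2}}{n-1}(\frac1{12}R_{nn;nn}+\frac23 R_{nins}R_{nsni})$; using $R_{nn;nn}=-2(R_{ninj})^2$ (Lemma \ref{conseq:fermi:conf'}(vi)) and $R_{nins}R_{nsni}=(R_{ninj})^2$ the bracket reduces to $\frac12(R_{ninj})^2$, producing the second displayed term. The main obstacle is purely organizational: keeping the index gymnastics straight while verifying that every unwanted contraction indeed reduces, via Lemma \ref{conseq:fermi:conf'}, to $R_{nn}$, its covariant derivatives, or the boundary Ricci tensor, so that only the clean $R_{ninj;ij}$ and $(R_{ninj})^2$ terms remain.
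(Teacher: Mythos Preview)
Your argument is correct and follows the same route as the paper: expand $(g^{ij}-\delta^{ij})(\epsilon y)y_iy_j$ via Lemma \ref{exp:g}, integrate over $S_r^{n-2}$, and use Lemma \ref{conseq:fermi:conf'} to eliminate all but the $\frac12 R_{ninj;kl}$ and $\frac1{12}R_{ninj;nn}+\frac23 R_{nins}R_{nsnj}$ contributions. The only cosmetic difference is that the paper evaluates the two surviving spherical integrals through the identity $\int_{S_r^{n-2}}p_k=\frac{r^2}{k(k+n-3)}\int_{S_r^{n-2}}\Delta p_k$ together with $\Delta^2(R_{ninj;kl}y_iy_jy_ky_l)=16R_{ninj;ij}$, whereas you use the equivalent explicit second and fourth moment formulas; both yield the same constants.
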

\bp
By Lemma \ref{exp:g},
\begin{align}
\int_{S^{n-2}_r}(g^{ij}-\delta^{ij})(\epsilon y)y_iy_jd\sigma_r(y)
&=\e^4\int_{S_r^{n-2}}\frac{1}{2}R_{ninj;\,kl}y_iy_jy_ky_ld\sigma_r(y)+O(\e^5|(r,y_n)|^{n+5})\notag
\\
&+\e^4y_n^2\int_{S_r^{n-2}}(\frac{1}{12}R_{ninj;\,nn}d\sigma_r(y)
+\frac{2}{3}R_{nins}R_{nsnj})y_iy_jd\sigma_r(y)\,.\notag
\end{align}
Then we just use the identity (\ref{rel:int:pol'}), Lemma \ref{conseq:fermi:conf'} and the fact that
$$\Delta^2(R_{ninj;\,kl}y_iy_jy_ky_l)=16R_{ninj;\,ij}\,.$$
\ep
\begin{lemma}\label{exp_g_B}
We have
\begin{align}
\int_{S^{n-2}_r}(g^{ij}-\delta^{ij})(\e y)R_{nknl}y_iy_jy_ky_ld\sigma_r(y)
&=\frac{2}{(n+1)(n-1)}\sigma_{n-2}\e^2y_n^2r^{n+2}(R_{ninj})^2\notag
\\
&\hspace{1cm}+O(\e^5|(r,y_n)|^{n+5})\notag
\end{align}
\end{lemma}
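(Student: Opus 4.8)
The plan is to insert the expansion of Lemma~\ref{exp:g} into $(g^{ij}-\delta^{ij})(\e y)$, group the result into homogeneous blocks, and integrate each block against $R_{nknl}y_iy_jy_ky_l$ over $S^{n-2}_r$ by means of the identity~(\ref{rel:int:pol'}), exactly in the spirit of the proof of Lemma~\ref{exp_g_A}. Since the substitution $x=\e y$ turns the order-$|x|^k$ part of $g^{ij}-\delta^{ij}$ into an $\e^k$ block, the terms of order $|x|^{\ge 5}$ contribute at most $O(\e^5|(r,y_n)|^{n+5})$ after integration; the task is therefore to show that the $\e^2$ block produces the stated main term and that the $\e^3$ and $\e^4$ blocks integrate to zero over the sphere.

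First I would treat the $\e^2$ block $\tfrac13\bar R_{ipjq}y_py_q+R_{ninj}y_n^2$. The boundary piece vanishes \emph{before} integration: contracted against $R_{nknl}y_iy_jy_ky_l$ it carries the factor $\bar R_{ipjq}y_iy_p$, which is zero because $\bar R$ is antisymmetric in $(i,p)$ while $y_iy_p$ is symmetric. The normal piece gives $\e^2y_n^2R_{ninj}R_{nknl}\int_{S^{n-2}_r}y_iy_jy_ky_l\,d\sigma_r$, and two applications of~(\ref{rel:int:pol'}) together with $\Delta^2(R_{ninj}R_{nknl}y_iy_jy_ky_l)=16(R_{ninj})^2$ — the computation already recorded in the proof of Lemma~\ref{lema1} — deliver exactly $\frac{2}{(n+1)(n-1)}\sigma_{n-2}\e^2y_n^2r^{n+2}(R_{ninj})^2$.

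The heart of the argument is the vanishing of the $\e^3$ and $\e^4$ blocks. At order $\e^3$ the three summands of Lemma~\ref{exp:g} are $\tfrac16\bar R_{ipjq;m}y_py_qy_m$, $R_{ninj;p}y_n^2y_p$ and $\tfrac13R_{ninj;n}y_n^3$; multiplied by $R_{nknl}y_iy_jy_ky_l$ the first two have odd degree in $\bar y$ (seven and five) and so integrate to zero under $\bar y\mapsto-\bar y$, while the third has even degree and reduces, after~(\ref{rel:int:pol'}), to a multiple of $R_{ninj;n}R_{ninj}$. To kill this I would invoke the contracted second Bianchi identity: at $x_0$, where the Christoffel symbols vanish to high order, $\sum_k R_{kinj;k}+R_{ninj;n}=R_{ij;n}-R_{in;j}$, and Lemma~\ref{conseq:fermi:conf'}(viii) gives $\sum_k R_{kinj;k}=0$ (since $R_{kinj}=R_{ikjn}$, and the vanishing divergence $R_{ijkn;j}=0$ of (viii) is precisely the contraction $\sum_k R_{ikjn;k}$ after relabelling), so the surviving scalar is $R_{ninj}(R_{ij;n}-R_{in;j})$, whose vanishing is the delicate step taken up below. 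At order $\e^4$ the $x_n^3x_k$ summand is again odd in $\bar y$ and drops out, while the pure-boundary term, the $x_n^2x_kx_l$ term and the $x_n^4$ term are even; each I would push through $\Delta^m$ via~(\ref{rel:int:pol'}) until only fully contracted scalars remain.

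I expect the main obstacle to be exactly this final reduction: confirming that every even-degree contraction left at orders $\e^3$ and $\e^4$ — the scalar $R_{ninj;n}R_{ninj}$, and at order $\e^4$ the pairings built from $\bar R_{iksl}R_{nsnj}$, $R_{ninj;kl}$, $R_{ninj;nn}$ and $R_{nins}R_{nsnj}$ — collapses once the symmetrizations and Bianchi-type identities of Lemma~\ref{conseq:fermi:conf'}(i),(ii),(iii),(v),(viii),(ix) are imposed. After $\Delta^m$ only Ricci- and scalar-curvature-type traces survive, and these are annihilated term by term by that lemma (the boundary block through the vanishing of $\bar R_{kl}$ and its symmetrized derivatives, the mixed and normal blocks through $R_{nn}=R_{nn;k}=R_{nn;n}=R_{nn;nk}=0$ and the mixed-divergence relation (viii)); what then remains is only the $|x|^{\ge5}$ tail of the metric, producing the advertised error $O(\e^5|(r,y_n)|^{n+5})$.
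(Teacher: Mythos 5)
Your handling of the $\e^2$ block is correct and is exactly the paper's proof (the paper's argument consists of that computation alone: the identity (\ref{rel:int:pol'}), the formula $\Delta^2(R_{ninj}R_{nknl}y_iy_jy_ky_l)=16(R_{ninj})^2$, and the claim that nothing else contributes before order $\e^5$). The genuine gap is at the step you yourself call the heart of the argument: the vanishing of the even $\e^3$ and $\e^4$ contributions. The mechanism you propose --- ``after $\Delta^m$ only Ricci- and scalar-curvature-type traces survive, and these are annihilated term by term'' by Lemma \ref{conseq:fermi:conf'} --- cannot work. For symmetric $2$-tensors $A,B$ one has
$$
\int_{S^{n-2}_r}A_{ij}B_{kl}\,y_iy_jy_ky_l\,d\sigma_r
=\frac{\sigma_{n-2}\,r^{n+2}}{(n+1)(n-1)}\bigl(\mathrm{tr}A\,\mathrm{tr}B+2A_{ij}B_{ij}\bigr)\,,
$$
so spherical integration leaves the \emph{cross-contraction} $A_{ij}B_{ij}$, not only traces; indeed this is precisely why your main term $(R_{ninj})^2$ survives even though $R_{nn}=0$. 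Applied to the third summand of the $\e^3$ block, the same formula leaves $\frac{2}{3}\frac{\sigma_{n-2}r^{n+2}}{(n+1)(n-1)}\,\e^3y_n^3\,R_{ninj;\,n}R_{ninj}$, and none of the items (i), (ii), (iii), (v), (viii), (ix) you invoke touches this scalar: they are trace conditions, whereas $R_{ninj;\,n}R_{ninj}$ is a full contraction of two trace-free tensors. Your Bianchi reduction $R_{ninj;\,n}=R_{ij;\,n}-R_{in;\,j}$ is correct (and Codazzi even gives $R_{in;\,j}(x_0)=0$), but it only renames the problem; the ``delicate step'' is never carried out, and no identity in Lemma \ref{conseq:fermi:conf'} performs it. The same issue recurs at order $\e^4$, where for instance $R_{ninj}R_{nins}R_{nsnj}$ and $R_{ninj;\,kl}R_{nknl}$ survive the integration.

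Worse, the vanishing you are trying to prove is false in general, so this route cannot be completed: on $\Rn$ take $g_{ij}=\delta_{ij}-b_{ij}x_n^2-\frac13 b_{ij}x_n^3+\dots$, $g_{in}=0$, $g_{nn}=1$, with $b$ constant, symmetric and trace-free, the fourth- and higher-order terms adjusted so that $\det g=1+O(|x|^N)$. The boundary is totally geodesic, these are conformal Fermi coordinates, and at the origin $R_{ninj}=b_{ij}$ and $R_{ninj;\,n}=b_{ij}$, so $R_{ninj;\,n}R_{ninj}=|b|^2\neq 0$. (The paper's own one-line proof is silent on these blocks as well, so you have hit a real subtlety; the reason nothing breaks downstream is that in the only place the lemma is used, equation (\ref{exist:aux:1}) inside Lemma \ref{lema2}, the leftover $\e^3$ and $\e^4$ contributions are of size $O(\e^4\delta)$, $O(\e^5\log(\delta\e^{-1}))$ and $O(\e^5)$ for $n=6$, $7$ and $\geq 8$ respectively, hence are absorbed by the errors $E_2'$ and $E_1$ stated there. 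The defensible version of the lemma keeps these blocks as explicit remainder terms of order $\e^3y_n^3r^{n+2}$ and $\e^4$, rather than asserting that they vanish.)
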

\bp
As in Lemma \ref{exp_g_A}, the result follows from 
\begin{align}
\int_{S^{n-2}_r}(g^{ij}-\delta^{ij})(\e y)R_{nknl}y_iy_jy_ky_ld\sigma_r(y)
&=\e^2y_n^2\int_{S^{n-2}_r}R_{ninj}R_{nknl}y_iy_jy_ky_ld\sigma_r(y)\notag
\\
&\hspace{1cm}+O(\e^5|(r,y_n)|^{n+5})\,,\notag
\end{align}
the fact that $\Delta^2(R_{ninj}R_{nknl}y_iy_jy_ky_l)=16(R_{ninj})^2$ and the identity (\ref{rel:int:pol'}).
\ep
\begin{lemma}\label{exp_g_C}
We have
\ba
\int_{S^{n-2}_r}R_g(\e y)d\sigma_r(y)
&=\sigma_{n-2}\e^2 \left\{\frac{1}{2}y_n^2r^{n-2}R_{;\,nn}
-\frac{1}{12(n-1)}r^{n}(\bar{W}_{ijkl})^2\right\}+O(\e^3|(r,y_n)|^{n+1})\,.\notag
\end{align}
\end{lemma}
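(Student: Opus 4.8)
The plan is to Taylor-expand $R_g(\e y)$ about the origin $x_0$ and integrate term by term over the sphere $S^{n-2}_r\subset\R^{n-1}$, on which $\bar{y}$ ranges over $|\bar{y}|=r$ while $y_n$ is held fixed. By Lemma \ref{conseq:fermi:conf'}(ix) we have $R(x_0)=0$ and $R_{,\,a}(x_0)=0$ for every index $a$, so the expansion carries no constant or linear term and begins at second order,
\[
R_g(\e y)=\frac{\e^2}{2}R_{;\,ab}(x_0)\,y_ay_b+O(\e^3|y|^3)\,.
\]
Since the first coordinate derivatives of $R$ vanish at $x_0$, the coordinate Hessian agrees there with the covariant Hessian, so the coefficients $R_{;\,ab}$ are exactly the covariant second derivatives controlled in Lemma \ref{conseq:fermi:conf'}.

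First I would split the quadratic form $R_{;\,ab}y_ay_b$ according to tangential and normal indices: a purely normal piece $R_{;\,nn}y_n^2$, a mixed piece $2R_{;\,in}y_iy_n$, and a purely tangential piece $R_{;\,ij}y_iy_j$. The normal piece is constant over $S^{n-2}_r$, hence integrates to $\frac12 R_{;\,nn}y_n^2\,\sigma_{n-2}r^{n-2}$, giving the first term of the claim. The mixed piece is odd in $\bar{y}$ and integrates to zero by parity. For the tangential piece I would apply the integration formula (\ref{rel:int:pol'}) with $k=2$, noting that the ambient space is $\R^{n-1}$ so the denominator is $2(n-1)$, together with $\Delta(R_{;\,ij}y_iy_j)=2R_{,\,ii}$, to obtain
\[
\int_{S^{n-2}_r}R_{;\,ij}y_iy_j\,d\sigma_r(y)=\frac{\sigma_{n-2}\,r^n}{n-1}\,R_{,\,ii}\,.
\]
Invoking Lemma \ref{conseq:fermi:conf'}(x), namely $R_{,\,ii}=-\frac16(\bar{W}_{ijkl})^2$, and multiplying by $\frac{\e^2}{2}$ produces the second term $-\frac{1}{12(n-1)}r^n(\bar{W}_{ijkl})^2$.

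For the remainder, the cubic and higher Taylor terms are homogeneous polynomials in $y$ carrying a factor $\e^3$ or higher; after integration over $S^{n-2}_r$ the surviving monomials (those of even degree in each tangential variable) contribute terms of total homogeneity $n+1$ in $(r,y_n)$, which is exactly the asserted error $O(\e^3|(r,y_n)|^{n+1})$. The step I would watch most carefully is the bookkeeping in (\ref{rel:int:pol'}): one must use the sphere dimension $n-2$ (ambient $\R^{n-1}$) and the tangential Laplacian $\sum_{i=1}^{n-1}\partial_i^2$ rather than the full Laplacian on $\R^n$, and one must correctly match $\Delta(R_{;\,ij}y_iy_j)=2R_{,\,ii}$ with the quantity governed by item (x). This is entirely parallel to the computations in Lemmas \ref{exp_g_A} and \ref{exp_g_B}, so no genuinely new difficulty arises.
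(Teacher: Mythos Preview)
Your proof is correct and follows essentially the same route as the paper's: Taylor-expand $R_g(\e y)$ to second order using Lemma \ref{conseq:fermi:conf'}(ix), integrate the normal and tangential quadratic pieces over $S^{n-2}_r$ (the mixed piece dying by parity), apply (\ref{rel:int:pol'}) with $k=2$ and Lemma \ref{conseq:fermi:conf'}(x) to the tangential part, and absorb the cubic remainder into $O(\e^3|(r,y_n)|^{n+1})$. Your write-up is simply a more explicit version of the paper's terse argument, and your cautionary remark about using the tangential Laplacian and the correct sphere dimension in (\ref{rel:int:pol'}) is exactly the point.
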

\bp
As in Lemma \ref{exp_g_A}, the result follows from 
\ba
\int_{S^{n-2}_r}R_g(\e y)d\sigma_r(y)&=\e^2y_n^2\int_{S^{n-2}_r}\frac{1}{2}R_{;\,nn}d\sigma_r(y)
+\e^2\int_{S^{n-2}_r}\frac{1}{2}R_{;\,ij}y_iy_jd\sigma_r(y)\notag
\\
&\hspace{1cm}+O(\e^3|(r,y_n)|^{n+1})\,,\notag
\end{align}
Lemma \ref{conseq:fermi:conf'}(x) and the identity (\ref{rel:int:pol'}).
\ep


\renewcommand{\theequation}{B-\arabic{equation}}
\setcounter{equation}{0}
\renewcommand{\thetheorem}{B-\arabic{theorem}}
\setcounter{theorem}{0}
\section*{Appendix B}

In this section we will perform some integrations by parts that were used in the computations of Section 3. 
\begin{lemma}\label{int:partes}
We have:
\\\\
(a) $\int_0^{\infty}\frac{s^{\alpha}ds}{(1+s^2)^m}=\frac{2m}{\alpha +1}\int_0^{\infty}\frac{s^{\alpha+2}ds}{(1+s^2)^{m+1}}\;$, for $\alpha +1<2m$;
\\\\
(b) $\int_0^{\infty}\frac{s^{\alpha}ds}{(1+s^2)^m}=\frac{2m}{2m-\alpha -1}\int_0^{\infty}\frac{s^{\alpha}ds}{(1+s^2)^{m+1}}\;$, for $\alpha +1<2m$;
\\\\
(c) $\int_0^{\infty}\frac{s^{\alpha}ds}{(1+s^2)^m}=\frac{2m-\alpha-3}{\alpha +1}\int_0^{\infty}\frac{s^{\alpha+2}ds}{(1+s^2)^{m}}\;$, for $\alpha +3<2m$.
\end{lemma}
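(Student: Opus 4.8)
The plan is to derive all three identities from a single integration by parts together with the elementary algebraic identity $s^2=(1+s^2)-1$. Throughout I write $J_m=\int_0^\infty s^\alpha(1+s^2)^{-m}\,ds$, and I observe that every integral appearing converges precisely under the stated hypothesis: the integrand behaves like $s^{\alpha}$ near $0$ and like $s^{\alpha-2m}$ at infinity, so $J_m$ is finite exactly when $-1<\alpha$ and $\alpha+1<2m$ (the applications only ever use nonnegative $\alpha$, so the lower bound is automatic), while the higher-degree integral in (c) forces the sharper condition $\alpha+3<2m$.

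First I would prove (a) directly. Taking $u=(1+s^2)^{-m}$ and $dv=s^\alpha\,ds$, so that $v=s^{\alpha+1}/(\alpha+1)$ and $du=-2ms(1+s^2)^{-m-1}\,ds$, integration by parts gives
\[
\int_0^\infty\frac{s^\alpha}{(1+s^2)^m}\,ds=\left[\frac{s^{\alpha+1}}{(\alpha+1)(1+s^2)^m}\right]_0^\infty+\frac{2m}{\alpha+1}\int_0^\infty\frac{s^{\alpha+2}}{(1+s^2)^{m+1}}\,ds.
\]
The boundary term vanishes at $s=0$ because $\alpha+1>0$ and at $s=\infty$ because $\alpha+1<2m$, which yields (a).

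Next, for (b) I would combine (a) with the splitting $s^{\alpha+2}=s^\alpha(1+s^2)-s^\alpha$, which gives $\int_0^\infty s^{\alpha+2}(1+s^2)^{-m-1}\,ds=J_m-J_{m+1}$. Substituting this into the right-hand side of (a) produces the linear relation $J_m=\frac{2m}{\alpha+1}(J_m-J_{m+1})$, and solving it for $J_m$ in terms of $J_{m+1}$ yields the factor $\frac{2m}{2m-\alpha-1}$, which is (b). Finally, for (c) I would apply (b) with $m$ replaced by $m-1$ (legitimate since $\alpha+3<2m$ is equivalent to $\alpha+1<2(m-1)$), giving $J_{m-1}=\frac{2m-2}{2m-\alpha-3}J_m$, and use the same splitting in the form $\int_0^\infty s^{\alpha+2}(1+s^2)^{-m}\,ds=J_{m-1}-J_m$. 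Eliminating $J_{m-1}$ between these two relations leaves $\int_0^\infty s^{\alpha+2}(1+s^2)^{-m}\,ds=\frac{\alpha+1}{2m-\alpha-3}J_m$, which rearranges to (c).

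There is essentially no genuine obstacle here; the only point requiring care is the bookkeeping of the convergence ranges — verifying in each case that the stated inequality on $\alpha$ guarantees both that all integrals involved are finite and that the boundary term in the integration by parts of the first step actually vanishes. Once (a) is in hand, (b) and (c) are purely algebraic manipulations of the recursion it generates.
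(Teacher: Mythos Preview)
Your proof is correct and follows essentially the same approach as the paper's: the same integration by parts for (a), the same algebraic splitting $s^2=(1+s^2)-1$ for (b), and the same shift $m\mapsto m-1$ for (c). The only cosmetic difference is that for (c) the paper equates the shifted versions of (a) and (b) directly, whereas you use the shifted (b) together with the splitting identity; these are equivalent one-line manipulations.
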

\begin{proof}
Integrating by parts,
$$\int_0^{\infty}\frac{s^{\alpha+2}ds}{(1+s^2)^{m+1}}=\int_0^{\infty} s^{\alpha+1}\frac{s\,ds}{(1+s^2)^{m+1}}=
\frac{\alpha+1}{2m}\int_0^{\infty}\frac{s^{\alpha} ds}{(1+s^2)^{m}}\,,$$
for $\alpha+1<2m$, which proves the item (a).

The item (b) follows from the item (a) and from
$$\int_0^{\infty}\frac{s^{\alpha}ds}{(1+s^2)^{m}}=\int_0^{\infty}\frac{s^{\alpha}(1+s^2)}{(1+s^2)^{m+1}}ds
=\int_0^{\infty}\frac{s^{\alpha}ds}{(1+s^2)^{m+1}}+\int_0^{\infty}\frac{s^{\alpha+2}ds}{(1+s^2)^{m+1}}\,.$$

To prove the item (c), observe that, by the item (a), 
$$\int_0^{\infty}\frac{s^{\alpha}ds}{(1+s^2)^{m-1}}
=\frac{2(m-1)}{\alpha+1}\int_0^{\infty}\frac{s^{\alpha+2}ds}{(1+s^2)^{m}}\,,$$ 
for $\alpha+3<2m$.
But, by the item (b), we have  
$$\int_0^{\infty}\frac{s^{\alpha}ds}{(1+s^2)^{m-1}}
=\frac{2(m-1)}{2(m-1)-\alpha-1}\int_0^{\infty}\frac{s^{\alpha}ds}{(1+s^2)^{m}}\,.$$
\end{proof}
\begin{lemma}\label{rel:int:t}
For $m>k+1$,
$$
\int_{0}^{\infty}\frac{t^k}{(1+t)^m}dt
=\frac{k!}{(m-1)(m-2)...(m-1-k)}\,.
$$
\end{lemma}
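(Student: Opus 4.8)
The plan is to prove the identity by induction on the nonnegative integer $k$, using a single integration by parts at each step, in exactly the elementary spirit of Lemma \ref{int:partes}. For the base case $k=0$ I would compute directly
\[
\int_0^{\infty}\frac{dt}{(1+t)^m}=\left[\frac{(1+t)^{1-m}}{1-m}\right]_0^{\infty}=\frac{1}{m-1}\,,
\]
which is valid because $m>1$ and which agrees with the claimed formula $\frac{0!}{m-1}$.

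For the inductive step I would fix $k\geq 1$, assume $m>k+1$, and integrate by parts with $u=t^k$ and $dv=(1+t)^{-m}\,dt$, so that $v=-\frac{1}{(m-1)(1+t)^{m-1}}$. This gives
\[
\int_0^{\infty}\frac{t^k}{(1+t)^m}\,dt
=\left[-\frac{t^k}{(m-1)(1+t)^{m-1}}\right]_0^{\infty}
+\frac{k}{m-1}\int_0^{\infty}\frac{t^{k-1}}{(1+t)^{m-1}}\,dt\,.
\]
The crucial observation is that the boundary term vanishes: at $t=0$ because $k\geq 1$, and as $t\to\infty$ because $\frac{t^k}{(1+t)^{m-1}}\to 0$ precisely when $m-1>k$, which is the standing hypothesis $m>k+1$. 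Moreover, the reduced integral has parameters $k-1$ and $m-1$, and the convergence requirement for it to be covered by the induction hypothesis is $m-1>(k-1)+1$, that is, again $m>k+1$. Thus the same inequality propagates, and the induction hypothesis applies to the reduced integral.

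It then remains to substitute
\[
\int_0^{\infty}\frac{t^{k-1}}{(1+t)^{m-1}}\,dt
=\frac{(k-1)!}{(m-2)(m-3)\cdots(m-1-k)}
\]
and multiply by $\frac{k}{m-1}$: the numerator becomes $k\cdot(k-1)!=k!$ and the denominator becomes $(m-1)(m-2)\cdots(m-1-k)$, which is exactly the asserted formula, closing the induction.

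There is no genuinely hard step here; the only point I would emphasize is the bookkeeping that the inequality $m>k+1$ is precisely what is needed both to annihilate the boundary term and to keep the reduced integral inside the range where the induction hypothesis is valid. A quicker but less self-contained alternative would be to recognize the integral as a Beta function, $\int_0^{\infty}t^{a-1}(1+t)^{-(a+b)}\,dt=\Gamma(a)\Gamma(b)/\Gamma(a+b)$ with $a=k+1$ and $b=m-k-1$, and simplify the resulting quotient of Gamma functions using $\Gamma(m)=(m-1)(m-2)\cdots(m-1-k)\,\Gamma(m-1-k)$; however, the inductive argument matches the elementary integration-by-parts style of this appendix and is therefore the one I would write out.
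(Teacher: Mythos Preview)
Your proof is correct. Both you and the paper argue by induction on $k$ using integration by parts, so the spirit is the same, but the specific reduction differs slightly: you take $u=t^k$, $dv=(1+t)^{-m}dt$ and obtain directly
\[
\int_0^\infty \frac{t^k}{(1+t)^m}\,dt=\frac{k}{m-1}\int_0^\infty \frac{t^{k-1}}{(1+t)^{m-1}}\,dt,
\]
a recursion that lowers both $k$ and $m$ by one. The paper instead first derives $\int_0^\infty t^{k-1}(1+t)^{1-m}\,dt=\frac{m-1}{k}\int_0^\infty t^k(1+t)^{-m}\,dt$ by parts, then splits $(1+t)^{1-m}=(1+t)(1+t)^{-m}$ to get
\[
\int_0^\infty \frac{t^k}{(1+t)^m}\,dt=\frac{k}{m-1-k}\int_0^\infty \frac{t^{k-1}}{(1+t)^m}\,dt,
\]
a recursion in $k$ alone with $m$ held fixed. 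Your route is one step shorter per reduction; the paper's has the mild advantage that the base case $k=0$ is reached with the original exponent $m$ rather than $m-k$. Either way the formula drops out immediately, and your handling of the convergence condition $m>k+1$ is cleaner than the paper's (which leaves it implicit).
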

\begin{proof}
Integrating by parts,
$$
\int_{0}^{\infty}t^{k-1}(1+t)^{1-m}dt
=\frac{m-1}{k}\int_{0}^{\infty}t^{k}(1+t)^{-m}dt\,.
$$
On the other hand,
$$
\int_{0}^{\infty}t^{k-1}(1+t)^{1-m}dt
=\int_{0}^{\infty}\frac{t^{k-1}(1+t)}{(1+t)^m}dt
=\int_{0}^{\infty}\frac{t^{k}}{(1+t)^m}dt
+\int_{0}^{\infty}\frac{t^{k-1}}{(1+t)^m}dt\,.
$$
Hence, 
$$
\int_{0}^{\infty}\frac{t^k}{(1+t)^{m}}dt
=\frac{k}{m-1-k}\int_{0}^{\infty}\frac{t^{k-1}}{(1+t)^{m}}dt\,.
$$
Now the result follows observing that
$
\int_{0}^{\infty}\frac{1}{(1+t)^{m}}dt=\frac{1}{m-1}\,.
$
\end{proof}


\bigskip\noindent
U{\small{NIVERSIDADE}} F{\small{EDERAL}} F{\small{LUMINENSE}} (UFF)\\ I{\small{NSTITUTO}} DE M{\small{ATEM\'{A}TICA}}\\
R{\small{UA}} M{\small{\'{A}RIO}} S{\small{ANTOS}} B{\small{RAGA S/N}}\\
N{\small{ITER\'{O}I,}} RJ 24020-140\\ B{\small{RAZIL}}\vspace{0.1cm}

\noindent
{\bf{almaraz@impa.br}}


\end{document}